\newtheorem{theorem}{Theorem}
\newtheorem{lemma}[theorem]{Lemma}
\newtheorem*{remark}{Remark}
\newcommand{\isep}{\!\mathrel{{.}{.}\!}\nobreak}
\newcommand{\ep}{\varepsilon}
\newcommand{\wt}{\mathrm{wt}}
\newcommand{\E}{\mathbb{E}}
\renewcommand{\a}{\mathbf{a}}
\renewcommand{\b}{\mathbf{b}}
\newcommand{\x}{\mathbf{x}}
\newcommand{\y}{\mathbf{y}}
\newcommand{\0}{\mathbf{0}}
\newcommand{\A}{\mathcal{A}}
\newcommand{\Aa}{\mathcal{A}}
\newcommand{\ct}{\mathrm{ct}_{\Aa}}
\newcommand{\cts}{\widetilde{\mathrm{ct}}_{\Aa}}
\newcommand{\bo}{\mathbbm{1}}
\newcommand{\gaa}{\gamma_{\Aa}(q)}
\newcommand{\mua}{\mu_{\Aa}(q,m,n,r)}
\newcommand{\siga}{\sigma_{\Aa}^2(q,m,n,r)}
\newcommand{\M}{\mathbf{M}}
\newcommand{\NN}{\mathbf{N}}
\newcommand{\Y}{\mathbf{Y}}
\newcommand{\X}{\mathbf{X}}
\newcommand{\N}{\NN}
\renewcommand{\S}{\mathcal{S}}
\newcommand{\F}{\mathbb{F}}
\newcommand{\Z}{\mathbb{Z}}
\renewcommand{\P}{\mathbb{P}}
\renewcommand{\l}{\ell}
\newcommand{\Var}{\mathrm{Var}}
\newcommand{\Fmn}{\mathbb{F}_q^{m \times n,r}}
\title{The central limit theorem for entries of random matrices with specific rank over finite fields}
\author{Chin Hei Chan\thanks{C. Chan is at the Dept. of Mathematics, Hong Kong University of Science and Technology, Clear Water Bay, Kowloon, Hong Kong (email: chchanam@connect.ust.hk). C. Chan would like to acknowledge the financial support provided by the math department of HKUST so that this work is possible. }, Maosheng Xiong\thanks{M. Xiong is at the Dept. of Mathematics, Hong Kong University of Science and Technology, Clear Water Bay, Kowloon, Hong Kong (email: mamsxiong@ust.hk). The research of M. Xiong was supported by RGC grant number 16307524 from Hong Kong.}}
\date{}
\begin{document}
\maketitle

\begin{abstract}
Let $\F_q$ be the finite field of order $q$, and $\Aa$ a non-empty proper subset of $\F_q$. Let $\M$ be a random $m \times n$ matrix of rank $r$ over $\F_q$ taken with unfiorm distribution. It was proved recently by Sanna that as $m,n \to \infty$ and $r,q,\Aa$ are fixed, the number of entries of $\M$ in $\Aa$ approaches a normal distribution. The question was raised as to whether or not one can still obtain a central limit theorem of some sort when $r$ goes to infinity in a way controlled by $m$ and $n$. In this paper we answer this question affirmatively.
\end{abstract}

\section{Introduction}\label{int} 

Denote by $\F_q$ the finite field of order $q$. For a matrix $\M$ over $\F_q$, denote by $\wt(\M)$ the \emph{weight} of $\M$ over $\F_q$, that is, the number of nonzero entries of $\M$.

For positive integers $m,n,r$, denote by $\Fmn$ the set of $m \times n$ matrices of rank $r$ over $\F_q$. After providing a formula for the mean value of $\wt(\M)$ as $\M$ is taken at random uniformly from the set $\Fmn$, Migler, Morrison and Ogle \cite{Mig} suggested that as $m,n \to \infty$ and $r,q$ are fixed, an appropriate scaling of $\wt(\M)$ approaches a normal distribution. This claim was proved recently by Sanna \cite{Sanna}, building upon his previous work \cite{Sanna1} which proved the claim partially under the condition that $q=2$ and $m/n$ converges to a positive real number.  

Sanna's proof is based on Fourier analysis over $\F_q$ and the M\"obius inversion formula, which is quite complex. While the method works nicely for $m,n \to \infty$ and $r,q$ are fixed,  when $r \to \infty$, however, the method runs into serious difficulties. Hence Sanna raised the question: \emph{Can one still obtain a central limit theorem of some sort when $r$ goes to infinity in a way controlled by $m$ and $n$?} (see \cite[Remark 5.1]{Sanna}). 

The purpose of this paper is to answer this question in the affirmative. Similar to Sanna's result \cite[Theorem 1]{Sanna}, our main result can also be stated for more general weight functions. To describe the main result, we need some notations. 

For every $\Aa \subset \F_q$ and for any matrix $\M$ over $\F_q$, denote by $\ct(\M)$ the number of entries of $\M$ that belong to $\Aa$. Also define 
$$\gaa:=q^{-1}\#\Aa-\bo_{\Aa}(0),$$
here $\#A$ denotes the cardinality of any finite set $A$, and,  
\[\bo_{\Aa}(x)=\left\{\begin{array}{lll}1&:& x \in \Aa;\\
0&:& x \notin \Aa .\end{array}\right.\]
It is easy to see that $q^{-1} \le  |\gamma_{\Aa}(q)|\le 1-q^{-1}$ if $\Aa$ is a non-empty proper subset of $\F_q$. 

For any positive integers $m,n,r$, define 
$$\mu_{\Aa}(q,m,n,r):=mn\left(q^{-1}\#\Aa-q^{-r}\gaa \right),$$
\begin{eqnarray*}
\sigma_{\Aa}^2(q,m,n,r):=&mn\left(q^{-1}\#\Aa-q^{-r}\gaa\right)\left(1-q^{-1}\#\Aa+q^{-r}\gaa\right)\\
&+mn(m+n-2)q^{-r}(1-q^{-r})\gaa^2. 
\end{eqnarray*}

Now we state the main result of this paper. 

\begin{theorem}\label{MainThm}
    Let $\emptyset \subsetneq \Aa \subsetneq \F_q$ be fixed and let $\M$ be taken at random with uniform distribution from the set $\Fmn$. Assume that as $m,n \to \infty$ such that $\min\{m,n\}-r \to \infty$ and one of the following three conditions holds: 

(i) $\lim_{m,n \to \infty} \frac{q^r}{\min \{m,n\}}=0$,  
    
(ii) $\lim_{m,n \to \infty}\frac{q^r}{(m+n)^a}=\infty$ for any fixed $a>0$, 

(iii) $m \asymp n$ and $\lim_{m\to \infty} \frac{q^r}{m}=\infty$,  

\noindent then the term 
    $$\frac{\ct(\M)-\mu_{\Aa}(q,m,n,r)}{\sqrt{\sigma_{\Aa}^2(q,m,n,r)}}$$
    converges in distribution to a standard normal random variable. 
    \end{theorem}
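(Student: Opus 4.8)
The plan is to trade the uniform rank-$r$ matrix for a product model in which the entries become an explicit bilinear form in independent random vectors, and then to run a Hoeffding (ANOVA) decomposition whose three pieces are controlled differently in the three regimes. \emph{Reduction to a product model.} Let $A\in\F_q^{m\times r}$, $B\in\F_q^{r\times n}$ have i.i.d.\ uniform entries, independently, and set $\M^\diamond=AB$. If $\a_i\in\F_q^r$ is the $i$-th row of $A$ and $\b_j\in\F_q^r$ the $j$-th column of $B$, then $\a_1,\dots,\a_m,\b_1,\dots,\b_n$ are i.i.d.\ uniform on $\F_q^r$ and $M^\diamond_{ij}=\langle\a_i,\b_j\rangle$. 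Conditioned on $A$ and $B$ both having rank $r$, $\M^\diamond$ is uniform on $\Fmn$; since $\min\{m,n\}-r\to\infty$, the probability $\prod_{i=0}^{r-1}(1-q^{i-m})(1-q^{i-n})$ of this event tends to $1$, so $\ct(\M)$ and $\ct(\M^\diamond)$ share the same distributional limit and it suffices to treat $\M^\diamond$. A direct computation gives $\E\,\ct(\M^\diamond)=\mua$ and $\Var\,\ct(\M^\diamond)=\siga$ \emph{exactly}, so nothing is lost in the normalization.

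\emph{Hoeffding decomposition.} Write the kernel $f(\a,\b)=\bo_\Aa(\langle\a,\b\rangle)=\theta+g(\a)+h(\b)+\psi(\a,\b)$ with $\theta=\E f$ and $\E_\a\psi=\E_\b\psi=0$. Then $\ct(\M^\diamond)-\mua=G+H+\Psi$, where $G=n\sum_i g(\a_i)$, $H=m\sum_j h(\b_j)$ and $\Psi=\sum_{i,j}\psi(\a_i,\b_j)$ are pairwise orthogonal with variances $mn^2\tau$, $m^2n\tau$ and $mn\sigma_\psi^2$, where $\tau=q^{-r}(1-q^{-r})\gaa^2$ and $\sigma_\psi^2=\theta(1-\theta)-2\tau$; one checks $mn\sigma_\psi^2+mn(m+n)\tau=\siga$. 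Since $\tau\asymp q^{-r}$ and $\sigma_\psi^2\asymp 1$, which piece dominates is dictated by the size of $q^r$ against $m,n$, and this is exactly what the three hypotheses arrange.

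\emph{Case (i).} Here $q^r=o(\min\{m,n\})$, so $\Var\Psi=o(\Var(G+H))$ and $\Psi$ is discarded by Slutsky. Now $G+H$ is a sum of $m+n$ independent centered bounded variables (each $g(\a_i),h(\b_j)$ is a centered two-point variable, a scaled centered $\mathrm{Bernoulli}(q^{-r})$). A Lyapunov fourth-moment ratio evaluates to $\asymp (m+n)q^r/(mn)\asymp q^r/\min\{m,n\}\to0$ \emph{exactly} in case (i), giving $(G+H)/\sqrt{\Var(G+H)}\Rightarrow N(0,1)$. The same computation explains the excluded intermediate range: when $q^r\asymp\min\{m,n\}$ the number of zero rows and columns becomes Poissonian, so $G,H$ stop being asymptotically normal.

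\emph{Cases (ii) and (iii), and the main obstacle.} Both hypotheses force $q^r\gg\max\{m,n\}$ (for (iii) because $m\asymp n$), hence $\Var(G+H)=o(\Var\Psi)$ and it remains to prove a central limit theorem for the completely degenerate bilinear statistic $\Psi$. I would use the method of moments. In $\E\Psi^k$ the double degeneracy kills every term in which some row- or column-index appears only once; encoding the survivors as bipartite multigraphs with all degrees $\ge2$, the top-order ($m^{k/2}n^{k/2}$) contributions come from perfect pairings and reproduce $(k-1)!!\,\sigma^k$, while every other configuration loses either a power of $\min\{m,n\}$ (a vertex of degree $\ge3$) or a factor $\rho_l=\sum_\nu\lambda_\nu^{2l}\big/\big(\sum_\nu\lambda_\nu^2\big)^l$ with $l\ge2$ (a cycle of length $2l$), where $\lambda_\nu$ are the singular values of $T_\psi\phi(\a)=\E_\b[\psi(\a,\b)\phi(\b)]$ on $L^2(\F_q^r)$. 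The decisive input is spectral flatness: expanding $\bo_\Aa$ in additive characters realizes $T_\psi$, on the mean-zero subspace, as $\sum_{c\ne0}\widehat{\bo_\Aa}(c)\,S_c$ with each $S_c$ equal to $q^{-r/2}$ times a unitary (a finite Fourier transform), so $\lambda_{\max}=O(q^{-r/2})$ while $\sum_\nu\lambda_\nu^2=\sigma_\psi^2\asymp1$, whence $\rho_l\le(\lambda_{\max}^2/\sigma_\psi^2)^{l-1}\to0$ and $\Psi/\sigma\Rightarrow N(0,1)$. I expect this last step to be the genuine difficulty: one must show that the degenerate part is asymptotically \emph{normal} rather than converging to a Gaussian chaos, which is precisely where the rank enters (through $T_\psi$) and where the elementary Fourier approach becomes delicate. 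The flatness of the spectrum is what forces normality, while hypotheses (ii)/(iii) are needed only to guarantee $q^r\gg\max\{m,n\}$, so that the skewed linear parts $G,H$ are negligible and cannot distort the limit.
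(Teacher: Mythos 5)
Your proposal is correct in substance, and it takes a genuinely different route from the paper, so a comparison is worthwhile. Both arguments start the same way: replace the uniform rank-$r$ matrix by the product $\X\Y$ of independent uniform factors (your conditioning-on-full-rank argument is equivalent to the paper's Lemma~\ref{P}, and your exact mean/variance identities are its Lemma~\ref{musigma}). After that you diverge. The paper never splits the kernel: it computes \emph{all} moments of the full centered statistic by a bipartite-multigraph expansion, in which the two-edge components (the paths C1, C2, of weight $q^{-r}(1-q^{-r})\gaa^2$, and the double edge D, of weight $\theta(1-\theta)$ with $\theta=q^{-1}\#\A-q^{-r}\gaa$) produce the main term $(\l-1)!!$ through a binomial identity that automatically interpolates between the two variance regimes, while hypotheses (i)--(iii) are consumed entirely by the error term $M_3$ coming from tree components with at least two edges. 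You instead perform the Hoeffding decomposition $\ct(\X\Y)-\mua=G+H+\Psi$ \emph{first}, which turns the regime dichotomy from a computational fact into a structural one: in case (i) the statistic is asymptotically the independent sum $G+H$, and your Lyapunov ratio $\asymp q^r/\min\{m,n\}$ (exactly hypothesis (i)) finishes it with no graph theory; in cases (ii)/(iii) it is asymptotically the degenerate bilinear form $\Psi$, whose moment expansion is much cleaner than the paper's because degeneracy annihilates every configuration containing a degree-one vertex --- precisely the configurations that force the paper's delicate $M_3$ estimate. Your spectral-flatness input is the right key and is essentially correct: on mean-zero functions $T_\psi$ equals the mean-zero projection $P_0$ composed with $\sum_{c\neq 0}\widehat{\bo_\A}(c)S_c$ (your formula omits the rank-one correction by $P_0$, which does not affect the bound), each $S_c$ is $q^{-r/2}$ times a unitary by Parseval over $\F_q^r$, hence $\lambda_{\max}=O_q(q^{-r/2})$ while $\sum_\nu\lambda_\nu^2=\sigma_\psi^2\to q^{-1}\#\A\,(1-q^{-1}\#\A)>0$, so each cycle of length $2l\geq 4$ costs $O_q(q^{-r(l-1)})$ and a Gaussian chaos limit is excluded. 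Two further points. First, your route actually proves a \emph{stronger} theorem in the large-rank regime: all you need there is $q^r/(m+n)\to\infty$ (to kill $G+H$ by Slutsky and to force $r\to\infty$), which is weaker than (ii) and subsumes (iii); the paper needs the stronger hypotheses only because its unified bound on $M_3$ carries the unbalancedness factor $\left[(m+n)^2/(mn)\right]^{\l/2-1}$, which your decomposition never produces. Second, what your sketch leaves to be written out --- the classification of surviving multigraphs for $\E[\Psi^k]$ into pairings, longer cycles, and configurations with a vertex of degree $\geq 3$ (the last losing a factor $\min\{m,n\}^{-1/2}$ against $(mn\sigma_\psi^2)^{k/2}$) --- is the same kind of bookkeeping as the paper's Section~\ref{high}, only in an easier setting, so there is no hidden obstacle; it just needs to be done.
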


\begin{remark} 1). The term $\mu_{\Aa}(q,m,n,r)$ is the same as that appeared in \cite{Sanna}. 

2). Under Condition (i), we have 
$$\sigma_{\Aa}^2(q,m,n,r) \sim \gaa^2 q^{-r}(1-q^{-r}) (m+n)mn, \quad \mbox{ as } m,n \to \infty.$$
This estimate of $\sigma_{\Aa}^2(q,m,n,r)$ is essentially the formula appearing in \cite{Sanna}, which dealt with the special case of (i) that $m,n \to \infty$ and $r,q$ are fixed. So in this case Theorem 1 extends \cite[Theorem 1.1]{Sanna} in the sense that we can allow $r \to \infty$ slowly with respect to $m, n$ as $m,n \to \infty$ (see Condition (i)). 

3). Under Condition either (ii) or (iii), then $r \to \infty$ as $m,n \to \infty$, and we have 
$$\sigma_{\Aa}^2(q,m,n,r) \sim q^{-1}\#\Aa \, (1-q^{-1}\#\Aa)mn, \quad \mbox{ as } m,n \to \infty.$$
This estimate of $\sigma_{\Aa}^2(q,m,n,r)$ is quite different from that in \cite{Sanna}, showing that under (ii) or (iii), the term $\ct(\M)$ has a quite different behavior (with a different variance), though after nomalization, it still converges to a standard normal random distribution.  
\end{remark}

To prove Theorem \ref{MainThm}, we use the moment method. Here our method differs significantly from that of Sanna: we compute all the moments directly via a graph method, which helps us identify the main terms and error terms according to different patterns of graphs. The graph method we use in this paper is reminiscent to those in \cite{MR4009185, MR4519869,MR3273060}, though the techniques involved here are different. One complex feature of this paper is that how each graph is decomposed into connected components plays an important role in the proof of the final result. 

This paper is organized as follows. In Section \ref{fol} we adopt Sanna's strategy and convert the original problem into that of the product of $m \times r$ and $r \times n$ matrices. So to prove Theorem \ref{MainThm}, it suffices to compute all the moments (see Theorem \ref{M} in Section \ref{fol}). It turns out that the method works even when $r \to \infty$. In Section \ref{exp} we compute the first two moments ($\l=1,2$) directly. In Section \ref{moment}, we set up the problem for the graph method and prove some crucial lemmas. Then finally in Section \ref{high}, we consider graph decomposition into connected components and identify the main terms and the error terms, hence proving Theorem \ref{M}. This concludes the paper.

\section{From $\Fmn$ to $\F_q^{m \times r} \times \F_q^{r \times n}$}\label{fol}

Denote by $\F_q^{m \times n}$ the set of $m \times n$ matrices over $\F_q$. To prove Theorem \ref{MainThm}, we first need to extend \cite[Lemma 4.2]{Sanna} to the case that $r \to \infty$. This turns out to be quite straightforward. 

\begin{lemma}\label{P}
    Let $\mathbf{M} \in \F_q^{m \times n,r}, \mathbf{X} \in \F_q^{m \times r}, \mathbf{Y} \in \F_q^{r \times n}$ be independent random matrices uniformly distributed in their respective spaces. Then
$$\sum_{\NN \in \F_q^{m \times n}} \big|\P[\mathbf{XY}=\NN]-\P[\mathbf{M}=\NN] \big| \to 0$$
    as $m,n \to +\infty$ such that $\min\{m,n\}-r \to \infty$.
\end{lemma}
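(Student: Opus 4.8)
The plan is to exploit the bi-invariance of the distribution of $\X\Y$ under the action of $\mathrm{GL}_m(\F_q)$ on the left and $\mathrm{GL}_n(\F_q)$ on the right, and thereby reduce the entire $\ell^1$-sum to twice the probability that $\X\Y$ fails to have rank exactly $r$. First I would record the invariance: for any $P \in \mathrm{GL}_m(\F_q)$ and $Q \in \mathrm{GL}_n(\F_q)$, the matrices $P\X$ and $\Y Q$ are again uniform on $\F_q^{m\times r}$ and $\F_q^{r\times n}$ respectively (left/right multiplication by an invertible matrix is a bijection on each space), and they remain independent. Hence $P(\X\Y)Q = (P\X)(\Y Q)$ has the same distribution as $\X\Y$. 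Since two matrices over $\F_q$ lie in the same $\mathrm{GL}_m \times \mathrm{GL}_n$ orbit precisely when they share the same rank (rank normal form), it follows that $\P[\X\Y = \NN]$ depends only on $\mathrm{rank}(\NN)$. In particular, writing $p := \P[\mathrm{rank}(\X\Y) = r]$, the value $\P[\X\Y = \NN]$ equals the constant $p/\#\Fmn$ for every $\NN \in \Fmn$, whereas $\P[\M = \NN] = 1/\#\Fmn$ on $\Fmn$ and vanishes off it.

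With this dichotomy in hand, I would split the sum according to whether $\mathrm{rank}(\NN) = r$. On $\Fmn$ each summand equals $(1-p)/\#\Fmn$, so these terms contribute $1-p$ in total; off $\Fmn$ the term $\P[\M = \NN]$ is zero, so the summands add up to $\P[\mathrm{rank}(\X\Y) \neq r] = 1-p$. The whole expression therefore equals exactly $2(1-p)$, and it remains only to prove $p \to 1$.

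For the final step I would observe that $\mathrm{rank}(\X\Y) = r$ holds if and only if $\X$ has full column rank $r$ and $\Y$ has full row rank $r$: any rank deficiency in either factor forces $\mathrm{rank}(\X\Y) \le \min\{\mathrm{rank}\,\X, \mathrm{rank}\,\Y\} < r$, while two full-rank factors compose to a product whose image is $\X(\F_q^r)$, of dimension $r$. By independence,
$$p = \P[\mathrm{rank}\,\X = r]\cdot\P[\mathrm{rank}\,\Y = r] = \prod_{i=0}^{r-1}\bigl(1 - q^{\,i-m}\bigr)\prod_{i=0}^{r-1}\bigl(1 - q^{\,i-n}\bigr).$$
Each factor is at least $1 - \tfrac{q^{\,r-m}}{q-1}$ (respectively $1 - \tfrac{q^{\,r-n}}{q-1}$), since $\sum_{i=0}^{r-1} q^{\,i-m} < q^{\,r-m}/(q-1)$, and both bounds tend to $1$ once $m-r \to \infty$ and $n-r \to \infty$, i.e. exactly under the hypothesis $\min\{m,n\} - r \to \infty$. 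Hence $p \to 1$ and the sum $2(1-p) \to 0$.

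I do not expect any genuine obstacle here, consistent with the remark that extending Sanna's fixed-$r$ lemma is ``quite straightforward'': the variable $r$ enters only through the final product estimate, and the bound above shows the two full-rank probabilities still converge to $1$ so long as the gap $\min\{m,n\}-r$ diverges, with no assumption that $r$ is bounded. The one point deserving explicit care is the claim that $\P[\X\Y = \NN]$ is genuinely constant on each rank stratum; this rests on the transitivity of the $\mathrm{GL}_m \times \mathrm{GL}_n$ action on rank-$r$ matrices, which I would state before invoking it so that the reduction to $2(1-p)$ is fully justified.
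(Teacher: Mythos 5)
Your proposal is correct, and it is actually more self-contained than the paper's own proof. The paper outsources the entire reduction step to Sanna's Lemma 4.2 (``following the proof of \cite[Lemma 4.2]{Sanna} closely, we just need to show that $A \to 0$'') and only proves that quantitative estimate; you instead prove the reduction from scratch, observing that the law of $\X\Y$ is invariant under the $\mathrm{GL}_m(\F_q)\times\mathrm{GL}_n(\F_q)$ action and hence constant on rank strata, which yields the exact identity
$$\sum_{\N \in \F_q^{m\times n}}\big|\P[\X\Y=\N]-\P[\M=\N]\big| \;=\; 2\big(1-\P[\mathrm{rank}(\X\Y)=r]\big).$$
From there the two arguments merge: your $1-p$, with $p=\prod_{i=0}^{r-1}(1-q^{i-m})(1-q^{i-n})$ (using that $\mathrm{rank}(\X\Y)=r$ holds precisely when both factors have rank $r$, together with independence), is exactly the paper's quantity $A$, and your tail estimate is the same as the paper's: the product inequality $\prod_i(1-x_i)\ge 1-\sum_i x_i$ followed by a geometric-series bound, giving convergence as soon as $\min\{m,n\}-r\to\infty$, with no assumption that $r$ stays bounded. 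What your route buys is transparency: the total variation distance is identified exactly as $2(1-p)$ via a clean orbit argument, rather than bounded through a cited lemma; what the paper's route buys is brevity. One small point to tighten in your write-up: when you assert that each of the two products is at least $1-q^{r-m}/(q-1)$ (respectively $1-q^{r-n}/(q-1)$), you are implicitly invoking the inequality $\prod_i(1-x_i)\ge 1-\sum_i x_i$, which the paper states explicitly and justifies by induction; since $r$ is allowed to grow with $m,n$, this passage from a product of $r$ factors to a sum bound should be stated rather than left implicit.
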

\begin{proof}
 Following the proof of \cite[Lemma 4.2]{Sanna} closely, in order to prove Lemma \ref{P}, we just need to show that
    \begin{eqnarray} \label{2:ineq1} A=1-\frac{\prod_{i=0}^{r-1}(q^m-q^i)(q^n-q^i)}{q^{mr}\cdot q^{rn}} \to 0\end{eqnarray}
    as $\min\{m,n\}-r \to \infty$.

It is easy to see that
\[A=1-\prod_{i=0}^{r-1}\left(1-q^{i-m}\right)\left(1-q^{i-n}\right)>0.\]
On the other hand, applying the inequality below, which can be proved easily by induction,
\[\prod_{i=1}^m (1-x_i) \ge 1-x_1-\cdots-x_m, \qquad \forall x_i \in (0,1), \]
we obtain
\begin{eqnarray*}
A &\le& \sum_{i=0}^{r-1} q^{i-m}+\sum_{i=0}^{r-1} q^{i-n} <\left(q^{r-1-m}+q^{r-1-n}\right) \sum_{i=0}^{\infty} q^{-i}\\
&=&\left(q^{r-m}+q^{r-n}\right) \frac{1}{1-q^{-1}} \to 0,
\end{eqnarray*}
as $\min\{m,n\} -r \to \infty$. So (\ref{2:ineq1}) is proved. This completes the proof of Lemma \ref{P}.

\end{proof}

Fix a nonempty $\Aa \subsetneq \F_q$ and for the sake of brevity, let
\[\cts(\N):=\frac{\ct(\N)-\mua}{\sqrt{\siga}} \]
for any $\N \in \F_q^{m \times n}$.

Let $\M \in \Fmn, \X \in \F_{q}^{m \times r}, \Y \in \F_{q}^{r \times n}$ be independent random matrices uniformly distributed in their respective spaces. Thanks to Lemma \ref{P} and following the idea of \cite{Sanna}, for every real number $t$, we have that
\begin{eqnarray} \label{2:conv} 
\big |\P[\cts(\M) \le t] - \P[\cts(\X \Y) \le t]\big |&=&\bigg | \sum_{\substack{\N \in \F_q^{m \times n} \nonumber\\
\cts(\N) \le t}} \left(\P[\M=\N]-\P[\X \Y=\N]\right) \bigg | \\
&\le & \sum_{\N \in \F_q^{m \times n}} \big |\P[\M=\N]-\P[\X \Y=\N]\big | \to 0,
\end{eqnarray}
as $\min\{m,n\}-r \to \infty$. So to prove Theorem \ref{MainThm}, it suffices to study $\cts(\X \Y)$ as $\X \in \F_{q}^{m \times r}, \Y \in \F_{q}^{r \times n}$ are independent random matrices uniformly distributed in their respective spaces. We will use the moment method and prove the following: 

\begin{theorem}\label{M}
    Let $\mathbf{X} \in \F_q^{m \times r}$ and $\mathbf{Y} \in \F_q^{r \times n}$ be uniformly and independently distributed in their respective spaces. As $m,n \to \infty$, assume one of the following three conditions holds: 

(i) $\lim_{m,n \to \infty} \frac{q^r}{\min \{m,n\}}=0$, or 
    
(ii) $\lim_{m,n \to \infty}\frac{q^r}{(m+n)^a}=\infty$ for any fixed $a>0$, or 

(iii) $m \asymp n$ and $\lim_{m\to \infty} \frac{q^r}{m}=\infty$. 

\noindent Then for any positive integer $\l$, 
    $$\E\left[\cts(\mathbf{XY})^\l\right]=\begin{cases}
        0 &(\l=1)\\
        1 &(\l=2)\\
        o_\l(1) &(\l \geq 3 \text{ and odd})\\
        (\l-1)!!+o_\l(1) &(\l \geq 4 \text{ and even}).
    \end{cases}$$
    Here the subscript $\l$ in the little-o notation means that the implied constant depends only on $\l$.
\end{theorem}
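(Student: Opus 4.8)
The plan is to expand each moment as a sum over bipartite multigraphs and to read off the leading behaviour from the connected components. Write $p = q^{-1}\#\Aa - q^{-r}\gaa$ for the common value of $\P[(\X\Y)_{ij} \in \Aa]$, and set $W_{ij} = \bo_{\Aa}((\X\Y)_{ij}) - p$, so that $\ct(\X\Y) - \mua = \sum_{i,j} W_{ij}$ and $\E[W_{ij}] = 0$. Since $(\X\Y)_{ij} = \langle \x_i, \y_j\rangle$ depends only on the $i$-th row $\x_i$ of $\X$ and the $j$-th column $\y_j$ of $\Y$, expanding the $\l$-th moment gives
$$\E\big[(\ct(\X\Y)-\mua)^{\l}\big] = \sum_{(i_1,j_1),\dots,(i_\l,j_\l)} \E\Big[\prod_{s=1}^{\l} W_{i_s j_s}\Big].$$
To each tuple I would associate the bipartite multigraph $G$ whose vertices are the distinct row indices $R$ and column indices $S$ occurring, with one edge $e_s = (i_s,j_s)$ per factor. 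The expectation depends only on the isomorphism type (the ``shape'') of $G$, and the number of index-tuples realizing a shape with $|R|=a$, $|S|=b$ is $\asymp m^a n^b$. I would therefore reorganize the sum as $\sum_{\text{shapes } G}(\text{labelings})\cdot E_G$, where $E_G = \E[\prod_e W_e]$.

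The first structural observation is that the rows $\{\x_i\}$ and columns $\{\y_j\}$ belonging to different connected components of $G$ are disjoint, hence independent, so $E_G = \prod_C E_C$ factors over the connected components $C$. Moreover a component consisting of a single edge contributes $\E[W_{ij}] = 0$; hence only shapes all of whose components carry at least two edges survive. The computational heart of the matter is to evaluate the order of magnitude of $E_C$ for a connected component. Conditioning on the rows $\{\x_i\}_{i\in R_C}$ and using that the columns $\y_j$ are then independent, one is led, for each column $j$, to the distribution of the inner products $(\langle \x_i,\y_j\rangle)_{i\in N(j)}$, which is uniform on a subspace whose dimension is the rank of $\{\x_i : i\in N(j)\}$. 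Carrying this through (equivalently, via an additive-character expansion the computation reduces to a single $\F_q$-coordinate raised to the $r$-th power) shows that $E_C$ is, up to a constant depending only on $\Aa$ and $q$, of order $q^{-r\rho_C}$, where $\rho_C$ is a rank parameter of the weighted graph. The two basic ``productive'' components are the double edge (two edges on the same pair $(i,j)$), with $E = p(1-p) = \Theta(1)$, and the cherry (two edges sharing exactly one endpoint), with $E = q^{-r}(1-q^{-r})\gaa^2 = \Theta(q^{-r})$; these are exactly the two terms making up $\siga$ computed in Section \ref{exp}.

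With the per-component orders in hand, the leading term is identified by a counting optimization. For $\l = 2h$, the optimal shapes consist of $h$ pairwise vertex-disjoint components, each a double edge or a cherry; summing over the $(2h-1)!! = (\l-1)!!$ ways of matching the $\l$ edge-slots into $h$ pairs and over the two unit types, each pair contributes exactly $\siga(1+o(1))$ (this is precisely the second-moment computation localized to two fresh vertex-sets), so the total is $(\l-1)!!\,\siga^{\,h}(1+o(1))$; after dividing by $\siga^{\,\l/2} = \siga^{\,h}$ this yields $(\l-1)!! + o(1)$. For odd $\l$ no partition into two-edge components exists, so every admissible shape must contain a component with at least three edges, which I will show is of strictly lower order, giving $o_\l(1)$.

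The main obstacle is the remaining error analysis: showing that every shape other than the ``$h$ disjoint pairs'' contributes $o_\l(1)$ after normalization. This splits into shapes containing a component with three or more edges, and configurations in which two would-be pairs share a vertex (merging into a larger component); in each case one must compare $m^a n^b|E_C|$ against the appropriate power of $\siga$. Because $\siga \asymp mn\cdot\max\{1,\,q^{-r}(m+n)\}$ behaves in two genuinely different ways --- dominated by double edges when $q^r$ is large and by cherries when $q^r$ is small --- no single comparison works throughout, and this is exactly where the three hypotheses (i)--(iii) enter: they are precisely the conditions under which all the error shapes can be controlled simultaneously. I expect the bulk of the work, and the most delicate bookkeeping, to lie in organizing this case analysis according to the component structure of $G$ and verifying that the surviving error terms tend to $0$ under each of (i), (ii), (iii); the disjointness corrections in the leading term (forcing the $h$ pairs onto distinct rows and columns) are of lower order and comparatively routine.
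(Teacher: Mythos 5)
Your proposal follows essentially the same route as the paper's proof: the same bipartite-multigraph expansion of the centered $\l$-th moment, the same factorization of the expectation over connected components, the same identification of double edges ($\Theta(1)$) and cherries ($\Theta(q^{-r})$) as the only surviving two-edge components, whose labeled types recombine into $(\l-1)!!\,(\siga)^{\l/2}(1+o(1))$, and the same plan of killing all other shapes by comparing $m^a n^b$ times the component expectations against powers of $\siga$ under hypotheses (i)--(iii). The part you defer as ``the bulk of the work'' is exactly what the paper spends Sections \ref{moment}--\ref{high} on: the uniform bound $O_\l(q^{-r})$ for every connected component containing a simple edge (Lemmas \ref{W'} and \ref{W0}, proved by conditioning on linear independence of the rows plus an inclusion--exclusion step, rather than a precise rank exponent $\rho_C$), together with the vertex-count bounds $u_{\gamma_i}+v_{\tau_i}\le \l_i/2+1$ for all-multiple-edge components and $u_{\gamma_i}+v_{\tau_i}\le \l_i+1$ in general, which are what make the error estimates ($M_2$ and $M_3$ in the paper) close under each of (i), (ii), (iii).
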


If Theorem \ref{M} is proved, since the sequence of the Gaussian moments $m_\l=(\l-1)!!$ satisfies the Carleman's condition (see \cite{MR3930614})
$$\sum_{\l=1}^\infty m_{2\l}^{-1/2\l}=+\infty,$$
we can conclude that the quantity $\cts(\mathbf{XY})$ converges in distribution to a standard normal random variable by the moment convergence theorem. Then by (\ref{2:conv}), as $\min\{m,n\}-r \to \infty$, the term $\cts(\mathbf{M})$ with $\mathbf{M} \in \F_q^{m \times n,r}$ chosen uniformly would follow the same distribution asymptotically. So this proves Theorem \ref{MainThm}. 

Thereby it remains to prove Theorem \ref{M}.

\section{Expectation and variance of $\ct(\X\Y)$} \label{exp}

For any integers $a,b$ with $a<b$, denote $[a \isep b]:=[a,b] \cap \Z$. 

Let $\mathbf{X} \in \F_q^{m \times r}$, $\mathbf{Y} \in \F_q^{r \times n}$ be independent random matrices uniformly distributed in their respective spaces. 
For $i \in [1\isep m]$ and $j \in [1\isep n]$, denote by $\x_i$ the $i$-th row of $\mathbf{X}$, and $\y_j^T$ the $j$-th column of $\mathbf{Y}$. Here $\x_i,\y_j \in \F_{q}^r$ are uniformly and independently distributed in the space. 

It is easy to see that the quantity $\ct(\mathbf{XY})$ can be expanded as
$$\ct(\mathbf{XY})=\sum_{i=1}^m\sum_{j=1}^n\mathbbm{1}_\A(\x_i\cdot \y_j).$$
We first prove Theorem \ref{M} for the cases $\l \in \{1,2\}$, that is, 
\begin{lemma}\label{musigma}
Under the above setting, we have
    \begin{enumerate}
        \item[1).] $\E[\ct(\mathbf{XY})]=\mu_\A(q,m,n,r)$;
        \item[2).] $\Var[\ct(\mathbf{XY})]=\sigma_\A^2(q,m,n,r)$.
    \end{enumerate}
\end{lemma}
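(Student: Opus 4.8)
The plan is to treat $\ct(\X\Y)=\sum_{i=1}^m\sum_{j=1}^n\bo_{\Aa}(\x_i\cdot\y_j)$ as a sum of indicator variables $Z_{ij}:=\bo_{\Aa}(\x_i\cdot\y_j)$ and to read off the first two moments from the independence structure of the rows $\x_i$ and columns $\y_j$. Everything rests on one elementary computation: the law of an inner product $\x\cdot\y$ for independent uniform $\x,\y\in\F_q^r$. Conditioning on $\y$, if $\y=\0$ (probability $q^{-r}$) then $\x\cdot\y=0$ deterministically, while if $\y\neq\0$ the linear functional $\x\mapsto\x\cdot\y$ is surjective onto $\F_q$, so $\x\cdot\y$ is uniform on $\F_q$. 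Hence for each fixed $c\in\F_q$,
$$\P[\x\cdot\y=c]=q^{-r}\bo(c=0)+(1-q^{-r})q^{-1},$$
and summing over $c\in\Aa$ gives $\P[\x\cdot\y\in\Aa]=q^{-r}\bo_{\Aa}(0)+(1-q^{-r})q^{-1}\#\Aa$. Substituting $\bo_{\Aa}(0)=q^{-1}\#\Aa-\gaa$ collapses this to $p:=q^{-1}\#\Aa-q^{-r}\gaa$. Statement 1) then follows at once from linearity: $\E[\ct(\X\Y)]=\sum_{i,j}\E[Z_{ij}]=mn\,p=\mua$.

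For the variance I would expand $\Var[\ct(\X\Y)]=\sum_{(i,j),(i',j')}\mathrm{Cov}(Z_{ij},Z_{i'j'})$ and sort the ordered index pairs into four types according to whether they share a row and/or a column. When $i\neq i'$ and $j\neq j'$ the four vectors $\x_i,\x_{i'},\y_j,\y_{j'}$ are mutually independent, so $Z_{ij}$ and $Z_{i'j'}$ are independent and contribute nothing. The diagonal pairs $(i,j)=(i',j')$ contribute $mn\cdot p(1-p)$. The two remaining families, $i=i',\,j\neq j'$ and $i\neq i',\,j=j'$, are structurally identical by the row/column symmetry of the setup. For the first I would condition on the shared vector $\x:=\x_i$: if $\x=\0$ both inner products vanish, whereas if $\x\neq\0$ the maps $\y_j\mapsto\x\cdot\y_j$ and $\y_{j'}\mapsto\x\cdot\y_{j'}$ applied to independent uniform vectors are themselves independent and each uniform on $\F_q$. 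This yields
$$\E[Z_{ij}Z_{ij'}]=q^{-r}\bo_{\Aa}(0)+(1-q^{-r})(q^{-1}\#\Aa)^2,$$
and the associated covariance $c:=\E[Z_{ij}Z_{ij'}]-p^2$.

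The main (and essentially only nonroutine) step is the algebraic simplification of $c$. Writing $b:=\bo_{\Aa}(0)\in\{0,1\}$ and $s:=q^{-1}\#\Aa$, so that $\gaa=s-b$ and $p=s-q^{-r}\gaa$, I would substitute and collect terms; the crucial observation is that the remainder depending purely on $b$ is $b-b^2$, which vanishes because $b$ is a $0/1$ quantity, leaving the clean identity $c=q^{-r}(1-q^{-r})\gaa^2$. Counting the index pairs then closes the argument: the family $i=i',\,j\neq j'$ contains $mn(n-1)$ ordered pairs and the family $i\neq i',\,j=j'$ contains $mn(m-1)$, so together they contribute $mn(m+n-2)\,c=mn(m+n-2)q^{-r}(1-q^{-r})\gaa^2$. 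Adding the diagonal contribution $mn\,p(1-p)$ reproduces exactly $\siga$, proving Statement 2). The anticipated pitfalls are purely bookkeeping: keeping the four index cases distinct (in particular not double-counting the diagonal), and carrying the substitution $\gaa=s-b$ carefully enough to see the $b-b^2$ cancellation that produces the advertised $\gaa^2$ factor.
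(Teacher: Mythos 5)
Your proposal is correct and follows essentially the same route as the paper's proof: the same conditioning on whether the shared vector is zero to get $\P[\x\cdot\y\in\Aa]=q^{-1}\#\Aa-q^{-r}\gaa$, the same four-case covariance decomposition with counts $mn(m-1)(n-1)$, $mn(n-1)$, $mn(m-1)$, $mn$, and the same algebraic collapse to $q^{-r}(1-q^{-r})\gaa^2$ relying on $\bo_{\Aa}(0)^2=\bo_{\Aa}(0)$. The only cosmetic difference is that you condition on $\y$ rather than $\x$ for the single-entry probability, which is immaterial by symmetry.
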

\begin{proof}
1). Let $\x$ be a fixed (deterministic) vector in $\F_q^r$. For any $j \in [1\isep n]$, if $\x=\0$, then $\x\cdot\y_j$ is always 0, while if $\x \neq \0$, then $\x\cdot\y_j$ runs uniformly over $\F_q$ as $\y_j$ varies over $\F_q^r$. Hence,
    $$\P[\x\cdot\y_j \in \A]=\begin{cases}
        \mathbbm{1}_\A(0) &(\x=\0)\\
        q^{-1}\#\A &(\x \neq \0).
    \end{cases}$$
    This implies, for any $i \in [1\isep m], j \in [1\isep n]$,
    \begin{align}\label{P0}
        \P\left[\x_i\cdot\y_j \in \A \right]&=\sum_{\x \in \F_q^r}\P[\x_i=\x]\P[\x_i\cdot\y_j\in \A | \x_i=\x]\nonumber\\
     &=q^{-r}\mathbbm{1}_\A(0)+(1-q^{-r})q^{-1}\#\A\nonumber\\
    &=q^{-1}\#\A-q^{-r}\gaa.
    \end{align}
So we have
    \begin{align*}
    \E[\ct(\mathbf{XY})]&=\sum_{i=1}^m\sum_{j=1}^n\E[\mathbbm{1}_\A(\x_i\cdot\y_j)]\\
    &=\sum_{i=1}^m\sum_{j=1}^n\P[\x_i\cdot\y_j \in \A]\\
    &=mn(q^{-1}\#\A-q^{-r}\gaa) =\mu_\A(q,m,n,r).
    \end{align*}

2). We have
    \begin{align}\label{V}
        \Var\left[\ct(\mathbf{XY})\right]&=\E\left[\ct(\mathbf{XY})^2\right]-\left(\E[\ct(\mathbf{XY})]\right)^2\nonumber\\
        &=\E\left[\left(\sum_{i=1}^m\sum_{j=1}^n \mathbbm{1}_\A(\x_i\cdot\y_j)\right)^2\right]-\left(\sum_{i=1}^m\sum_{j=1}^n\E[\mathbbm{1}_\A(\x_i\cdot\y_j)]\right)^2\nonumber\\
        &=\sum_{i,i'=1}^m\sum_{j,j'=1}^n W_{iji'j'}
    \end{align}
    where
    $$W_{iji'j'}:=\E[\mathbbm{1}_\A(\x_i\cdot\y_j)\mathbbm{1}_\A(\x_{i'}\cdot\y_{j'})]-\E[\mathbbm{1}_\A(\x_i\cdot\y_j)]\E[\mathbbm{1}_\A(\x_{i'}\cdot\y_{j'})].$$
    We now evaluate the quantity $W_{iji'j'}$ by dividing into the following four cases:

    \textbf{Case 0:} $i \neq i', j \neq j'$

    In this case, since all the row vectors $\x_i,\y_j,\x_{i'},\y_{j'}$ are independently distributed in $\F_q^r$, we have 
    $$W_{iji'j'}=0.$$

    \textbf{Case 1:} $i=i'$ and $j \neq j'$

    In this case the vectors $\x_i,\y_j$ and $\y_{j'}$ are independently distributed in $\F_q^r$. We have, by (\ref{P0}),
    \begin{align*}
        \E\left[\mathbbm{1}_\A(\x_i\cdot\y_j)\mathbbm{1}_\A(\x_{i}\cdot\y_{j'})\right]&=\E\left[\E[\mathbbm{1}_\A(\x_i\cdot\y_j)\mathbbm{1}_\A(\x_i\cdot\y_{j'})|\x_i] \right]\\
&=\E\left[\E[\mathbbm{1}_\A(\x_i\cdot\y_j)|\x_i] \E[\mathbbm{1}_\A(\x_i\cdot\y_{j'})|\x_i]\right]\\
        &=q^{-r}\sum_{\x \in \F_q^r}\P[\x\cdot\y_j \in \A \wedge \x\cdot\y_{j'}\in \A]\\
        &=q^{-r}\sum_{\x \in \F_q^r}\P[\x\cdot\y_j \in \A]\P[\x\cdot\y_{j'}\in \A]\\
        &=q^{-r}\mathbbm{1}_\A(0)+(1-q^{-r})(q^{-1}\#\A)^2.
    \end{align*}
    Hence
    \begin{align*}
    W_{iji'j'}&=q^{-r}\mathbbm{1}_\A(0)+(1-q^{-r})(q^{-1}\#\A)^2-[q^{-r}\mathbbm{1}_\A(0)+(1-q^{-r})q^{-1}\# \A]^2\\
    &=q^{-r}(1-q^{-r})[\mathbbm{1}_\A(0)-2q^{-1}\# \A\mathbbm{1}_\A(0)+(q^{-1}\#\A)^2]\\
    &=q^{-r}(1-q^{-r})\gaa^2.
    \end{align*}
    \textbf{Case 2:} $i \neq i'$ and $j=j'$

    This case is similar to \textbf{Case 1} with the roles of $i$ and $j$ swapped. Since dot product is commutative, we easily see that in this case we also have
    $$W_{iji'j'}=q^{-r}(1-q^{-r})\gaa^2.$$
    \textbf{Case 3}: $i=i'$ and $j=j'$

    In this case we have
    $$\E[\mathbbm{1}_\A(\x_i\cdot\y_j)\mathbbm{1}_\A(\x_{i'}\cdot\y_{j'})]=\E[\mathbbm{1}_\A(\x_i\cdot\y_j)^2]=\E[\mathbbm{1}_\A(\x_i\cdot\y_j)]=q^{-1}\#\A-q^{-r}\gaa$$
    by (\ref{P0}).

    Therefore
    \begin{align*}
    W_{iji'j'}&=q^{-1}\#\A-q^{-r}\gaa-(q^{-1}\# \A-q^{-r}\gaa)^2\\
    &=(q^{-1}\#\A-q^{-r}\gaa)(1-q^{-1}\#\A+q^{-r}\gaa).
    \end{align*}

A simple counting shows that there are $mn(m-1)(n-1), mn(n-1), mn(m-1)$ and $mn$ choices of $(i,j,i',j')$ in \textbf{Cases 0,1,2} and \textbf{3} respectively. Combining all these and putting into (\ref{V}) then yields
\begin{align*}
    &\Var[\ct(\mathbf{XY})]\\
    &=mn(m+n-2)q^{-r}(1-q^{-r})\gaa^2+mn\left(q^{-1}\#\A-q^{-r}\gaa\right)\left(1-q^{-1}\#\A+q^{-r}\gaa\right)\\
    &=\sigma_\A^2(q,m,n,r)
\end{align*}
as desired.
\end{proof}

\section{Estimation of Higher Order Moments}\label{moment} 

Now we prove Theorem \ref{M} for $\l \ge 3$. 
\subsection{Problem Set-up}

Given positive integers $a$ and $b$, denote by $\Gamma_{a,b}$ the set of all maps $\gamma: [1\isep a] \to [1\isep b]$. 

By Lemma \ref{musigma}, we may write
\begin{align}
    \E[\cts(\mathbf{XY})^\l]&=\E\left[\left(\frac{\sum_{i=1}^m\sum_{j=1}^n \Big\{\mathbbm{1}_\A(\x_i\cdot\y_j)-\E[\mathbbm{1}_\A(\x_i\cdot\y_j)]\Big\}}{\sigma_\A(q,m,n,r)}\right)^\l\right]\nonumber\\
    &=\sum_{\gamma \in \Gamma_{\l,m}}\sum_{\tau \in \Gamma_{\l,n}}
    \frac{\E\left[\prod_{k=1}^\l \Big\{\mathbbm{1}_\A(\x_{\gamma(k)}\cdot\y_{\tau(k)})-\E[\mathbbm{1}_\A (\x_{\gamma(k)}\cdot\y_{\tau(k)})]\Big\}\right]}{(\sigma_\A^2(q,m,n,r))^{\l/2}}
    \nonumber\\
    &=:\frac{1}{(\sigma_\A^2(q,m,n,r))^{\l/2}}\sum_{\gamma \in \Gamma_{\l,m}}\sum_{\tau \in \Gamma_{\l,n}}W_{\gamma\tau}, \nonumber
    \end{align}
    where for any $\gamma \in \Gamma_{\l,m}, \tau \in \Gamma_{\l,n}$, 
    \begin{align} \label{E3}
    W_{\gamma\tau}:&=\E\left[\prod_{k=1}^\l \Big\{\mathbbm{1}_\A(\x_{\gamma(k)}\cdot\y_{\tau(k)})-\E[\mathbbm{1}_\A(\x_{\gamma(k)}\cdot\y_{\tau(k)})]\Big\}\right].
    \end{align}
    For any positive integer $a$, denote by $\Sigma_a$ the set of permutations on $[1\isep a]$. It is then easy to see that for any $\rho \in \Sigma_m$ and any $\pi \in \Sigma_n$, 
    $$W_{(\rho \circ \gamma)(\pi \circ \tau)}=W_{\gamma\tau}.$$
    Moreover, define $\gamma' \sim \gamma$ whenever $\gamma'=\rho \circ \gamma$ for some $\rho \in \Sigma_m$ and $\tau' \sim \tau$ whenever $\tau'=\pi \circ \tau$ for some $\pi \in \Sigma_n$. This defines equivalence relations on $\Gamma_{\l,m}$ and $\Gamma_{\l,n}$ respectively. Now for any $\gamma \in \Gamma_{\l,m}$ and $\tau \in \Gamma_{\l,n}$, define
    $$U_\gamma:=\left\{\gamma(k): k \in [1\isep \l] \right\}, \quad V_\tau=\left\{ \tau(k): k \in [1\isep \l] \right\},$$
    $$u_\gamma=\#U_\gamma, \quad v_\tau=\#V_\tau.$$
   It is easy to see that 
    $$\#[\gamma]=\frac{m!}{(m-u_\gamma)!}, \quad \#[\tau]=\frac{n!}{(n-v_\tau)!}.$$
    Denote $\Gamma_\l:=\Gamma_{\l,m}/\Sigma_m \times \Gamma_{\l,n}/\Sigma_n$. Then we have    
    \begin{equation}\label{E4}
    \E[\cts(\mathbf{XY})^\l]=\frac{1}{(\sigma_\A^2(q,m,n,r))^{\l/2}}\sum_{(\gamma,\tau) \in \Gamma_\l}\frac{m!}{(m-u_\gamma)!}\frac{n!}{(n-v_\tau)!}W_{\gamma\tau},
    \end{equation}
    where $W_{\gamma\tau}$ is defined in (\ref{E3}). 
    
\subsection{Analysis of $W_{\gamma\tau}$}
For each $(\gamma,\tau) \in \Gamma_\l$, we define an undirected bipartitie graph $G_{\gamma \tau}=(U_{\gamma},V_{\tau},E_{\gamma \tau})$ as follows: the vertex set is $U_{\gamma} \cup V_{\tau}$ and the edge set is the multi-set
\[E_{\gamma \tau}:=\left\{\left\{\overline{\gamma(k) \tau(k)}: k \in [1 \isep \l]\right\}\right\}. \]
We also define an undirected bipartitie graph $G_{\gamma \tau}'=(U_{\gamma},V_{\tau},E_{\gamma \tau})$ where the vertex set is $U_{\gamma} \cup V_{\tau}$ but the edge set is the set 
\[E_{\gamma \tau}':=\left\{\overline{\gamma(k) \tau(k)}: k \in [1 \isep \l]\right\}. \]
So $G_{\gamma \tau}$ is a multi-graph, with possibly multiple edges from a vertice in $U_{\gamma}$ to a vertice in $V_{\tau}$ and $\#E_{\gamma \tau}=\l$; $G_{\gamma \tau}'$ is a simple graph, with at most one edge from a vertice in $U_{\gamma}$ to a vertice in $V_{\tau}$. If we denote $\ep_{\gamma \tau}':=\#E_{\gamma \tau}'$, then clearly 
\[ \ep_{\gamma\tau}' \leq \#E_{\gamma \tau}=\l, \quad \forall \gamma, \tau. \]

We first have the following preliminary estimation on $W_{\gamma\tau}$.
\begin{lemma}\label{W0}
    Let $(\gamma,\tau) \in \Gamma_\l$. Then
    $$W_{\gamma\tau}=\begin{cases}
        0 &(\l=1),\\
        O_\l(q^{-r}) &(\l \geq 2 \text{ and the graph } G_{\gamma \tau} \text{ has at least one simple edge}),\\
        O_\l(1) &(\text{otherwise}).
    \end{cases}$$
\end{lemma}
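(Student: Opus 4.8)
The plan is to expand everything in additive characters over $\F_q$, which converts $W_{\gamma\tau}$ into a finite combination of rank computations over $\F_q$. Fix a nontrivial additive character $\psi$ of $\F_q$ and write $\mathbbm{1}_\A(x)=\sum_{t\in\F_q}c_t\,\psi(tx)$, where $c_0=q^{-1}\#\A$ and $c_t=q^{-1}\sum_{a\in\A}\psi(-ta)$, so that $|c_t|\le q^{-1}\#\A\le 1$ for every $t$. Setting $\phi_k(t):=\psi\!\left(t\,\x_{\gamma(k)}\cdot\y_{\tau(k)}\right)$, the first step is to observe that the $t=0$ mode drops out after centering: since $\phi_k(0)\equiv 1$ and $\E[\phi_k(t)]=q^{-r}$ for $t\neq 0$ (because $t\x$ is uniform when $\x$ is, and $\E_{\y}[\psi(\x\cdot\y)]=\mathbbm{1}(\x=\0)$), one gets
$$\mathbbm{1}_\A(\x_{\gamma(k)}\cdot\y_{\tau(k)})-\E[\mathbbm{1}_\A(\x_{\gamma(k)}\cdot\y_{\tau(k)})]=\sum_{t\in\F_q^*}c_t\bigl(\phi_k(t)-q^{-r}\bigr).$$
Substituting into (\ref{E3}) and expanding the product over $k$, it then suffices to bound, for each fixed tuple $(t_1,\dots,t_\l)\in(\F_q^*)^\l$, the quantity $\E\!\left[\prod_{k=1}^\l(\phi_k(t_k)-q^{-r})\right]$, since there are only $O_\l(1)$ such tuples and each $|c_{t_k}|\le 1$.

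The second and key step is a linear-algebra evaluation of the character averages. For any $S\subseteq[1\isep\l]$, grouping the exponent by the $\y$-vertices gives
$$\E\!\left[\prod_{k\in S}\phi_k(t_k)\right]=\E\!\left[\psi\!\Bigl(\sum_{j\in V_\tau}\y_j\cdot\c_j\Bigr)\right],\qquad \c_j:=\sum_{k\in S:\ \tau(k)=j}t_k\,\x_{\gamma(k)}.$$
Averaging first over the independent uniform columns $\y_j$ forces $\c_j=\0$ for all $j$, and then averaging over the independent uniform rows $\x_i$ gives $q^{-r\cdot\mathrm{rank}(M^{(S)})}$, where $M^{(S)}$ is the $U_\gamma\times V_\tau$ matrix over $\F_q$ with entries $M^{(S)}_{ij}=\sum_{k\in S:\ \gamma(k)=i,\ \tau(k)=j}t_k$ (the coordinatewise independence of the $\x_i$ over the $r$ coordinates accounts for the exponent $r$, each coordinate contributing $q^{-\mathrm{rank}(M^{(S)})}$). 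Expanding $\prod_k(\phi_k(t_k)-q^{-r})=\sum_{S}(-q^{-r})^{\l-|S|}\prod_{k\in S}\phi_k(t_k)$ would then yield
$$\E\!\left[\prod_{k=1}^\l(\phi_k(t_k)-q^{-r})\right]=\sum_{S\subseteq[1\isep\l]}(-1)^{\l-|S|}\,q^{-r\left(\l-|S|+\mathrm{rank}(M^{(S)})\right)}.$$

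The third step reads off the lemma from the exponents. Every exponent $\l-|S|+\mathrm{rank}(M^{(S)})$ is nonnegative and there are $O_\l(1)$ terms, so $W_{\gamma\tau}=O_\l(1)$ in all cases, while $W_{\gamma\tau}=0$ when $\l=1$ directly from the cancellation above. An exponent can vanish only when $S=[1\isep\l]$ (forcing $\l-|S|=0$) and $M^{([1\isep\l])}=0$. If $G_{\gamma\tau}$ has a simple edge, i.e.\ some pair $(i_0,j_0)$ is hit by a unique index $k_0$, then $M^{([1\isep\l])}_{i_0 j_0}=t_{k_0}\neq 0$ precisely because $t_{k_0}\in\F_q^*$; hence $\mathrm{rank}(M^{([1\isep\l])})\ge 1$, every surviving term carries a factor $q^{-r}$, and $W_{\gamma\tau}=O_\l(q^{-r})$.

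I expect the main obstacle to be the clean justification of the rank formula $\E[\prod_{k\in S}\phi_k(t_k)]=q^{-r\,\mathrm{rank}(M^{(S)})}$, together with the bookkeeping that places the nonzero spectral input at exactly the right spot. It is essential that centering restricts the characters to $t_k\in\F_q^*$, since this is what upgrades a simple edge into a nonzero entry of $M^{([1\isep\l])}$ and thereby into a genuine $q^{-r}$ gain; everything else reduces to the routine expansion and the nonnegativity of ranks. A useful sanity check is that this recovers Lemma \ref{musigma}: the double edge of \textbf{Case 3} has $M^{([1\isep 2])}$ possibly zero (for suitable $t_1=-t_2$), yielding the $O_\l(1)$ term, whereas the configurations of \textbf{Cases 0,1,2} all contain simple edges and produce the $O_\l(q^{-r})$ behavior.
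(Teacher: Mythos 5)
Your proposal is correct, but it proves the lemma by a genuinely different route than the paper. The paper's proof is purely probabilistic: it first establishes Lemma~\ref{W'}, namely $\E\big[\prod_k \mathbbm{1}_\A(\x_{\gamma(k)}\cdot\y_{\tau(k)})\big]=(q^{-1}|\A|)^{\ep_{\gamma\tau}'}+O_\l(q^{-r})$, by conditioning on the event that the rows $\{\x_s: s\in U_\gamma\}$ are linearly independent (which holds with probability $1+O_\l(q^{-r})$), and then expands the centered product in (\ref{E3}) by inclusion--exclusion, exploiting that adjoining the simple edge $e_1$ to any $\S\subset[2\isep\l]$ increases $\ep'$ by exactly one, so the main terms $(q^{-1}|\A|)^{\ep'_{\widetilde\S}}$ and $(q^{-1}|\A|)^{\ep'_{\S}+1}$ cancel, leaving $O_\l(q^{-r})$. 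You instead expand $\mathbbm{1}_\A$ in additive characters and reduce everything to the exact identity $\E\big[\prod_{k\in S}\phi_k(t_k)\big]=q^{-r\,\mathrm{rank}(M^{(S)})}$, which yields a closed form $\sum_S(-1)^{\l-|S|}q^{-r(\l-|S|+\mathrm{rank}(M^{(S)}))}$ for each character mode; the dichotomy of the lemma then reads off from the exponents, with the simple edge forcing a nonzero entry $t_{k_0}\in\F_q^*$ of $M^{([1\isep\l])}$, hence rank at least $1$ and a factor $q^{-r}$ in every term. All your steps check out (the column-by-column kernel computation giving the exponent $r\cdot\mathrm{rank}$, the vanishing at $\l=1$, and the trivial $O_\l(1)$ bound), and your approach is self-contained: it replaces both Lemma~\ref{W'} and the paper's inclusion--exclusion with a single exact formula, and it even gives potentially sharper information ($q^{-r\,\mathrm{rank}}$ rather than just $q^{-r}$). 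Two remarks: first, your counts $(q-1)^\l$ of character tuples and $2^\l$ of subsets make the implied constants depend on $q$ as well as $\l$, but this matches the paper's own convention, since $q$ and $\A$ are fixed and the paper's constants (e.g.\ in Lemma~\ref{W'}) also depend on $q$; second, it is slightly ironic that the paper advertises avoiding Sanna's Fourier-analytic machinery, whereas you reintroduce it---but only locally, to bound a single $W_{\gamma\tau}$, where it causes none of the difficulties that obstructed Sanna's global argument as $r\to\infty$, since the graph-decomposition framework of the paper is what carries the asymptotics.
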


Before proving Lemma \ref{W0}, we need the following result.
\begin{lemma}\label{W'}
    For any $(\gamma,\tau) \in \Gamma_\l$, we have 
    $$\E\left[\prod_{k=1}^\l \mathbbm{1}_\A(\x_{\gamma(k)}\cdot \y_{\tau(k)})\right]=\left(q^{-1}|\A|\right)^{\ep_{\gamma\tau}'}+O_\l(q^{-r}).$$
\end{lemma}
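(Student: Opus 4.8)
The plan is to reduce the expectation to a product over the \emph{distinct} edges of the simple graph $G_{\gamma\tau}'$, and then compare it against the ``fully independent'' heuristic value $(q^{-1}|\A|)^{\ep_{\gamma\tau}'}$ by conditioning on the row vectors $(\x_i)_{i \in U_\gamma}$. First, since $\mathbbm{1}_\A$ is an indicator and hence idempotent, repeated edges of the multigraph $G_{\gamma\tau}$ collapse, so that
$$\prod_{k=1}^\l \mathbbm{1}_\A(\x_{\gamma(k)}\cdot \y_{\tau(k)}) = \prod_{\overline{ij} \in E_{\gamma\tau}'}\mathbbm{1}_\A(\x_i\cdot \y_j),$$
and the quantity to be estimated is exactly $\P[\x_i\cdot\y_j \in \A \text{ for all } \overline{ij}\in E_{\gamma\tau}']$.

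Next I would condition on $\x=(\x_i)_{i\in U_\gamma}$. Since the columns $(\y_j)_{j\in V_\tau}$ are mutually independent and independent of $\x$, the conditional expectation factors over $j \in V_\tau$:
$$\E\left[\prod_{\overline{ij}\in E_{\gamma\tau}'}\mathbbm{1}_\A(\x_i\cdot\y_j)\,\Big|\,\x\right]=\prod_{j\in V_\tau}\E\left[\prod_{i\sim j}\mathbbm{1}_\A(\x_i\cdot\y_j)\,\Big|\,\x\right],$$
where $i\sim j$ denotes adjacency in $G_{\gamma\tau}'$ and $d_j$ will denote the degree of $j$. Define the \emph{good event} $\mathcal{G}$ that for every $j\in V_\tau$ the neighboring vectors $\{\x_i:i\sim j\}$ are linearly independent in $\F_q^r$. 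On $\mathcal{G}$, for each $j$ the evaluation map $\y\mapsto(\x_i\cdot\y)_{i\sim j}$ is a surjection $\F_q^r\to\F_q^{d_j}$, so $(\x_i\cdot\y_j)_{i\sim j}$ is uniform on $\F_q^{d_j}$ and the $j$-th factor equals $(q^{-1}|\A|)^{d_j}$; multiplying over $j$ and using $\sum_{j\in V_\tau}d_j=\ep_{\gamma\tau}'$ shows the conditional expectation equals $(q^{-1}|\A|)^{\ep_{\gamma\tau}'}$ identically on $\mathcal{G}$. Hence
$$\E\left[\prod_{k=1}^\l \mathbbm{1}_\A(\x_{\gamma(k)}\cdot\y_{\tau(k)})\right]=(q^{-1}|\A|)^{\ep_{\gamma\tau}'}\,\P[\mathcal{G}]+\E\left[\prod_{\overline{ij}\in E_{\gamma\tau}'}\mathbbm{1}_\A(\x_i\cdot\y_j)\,\mathbbm{1}_{\mathcal{G}^c}\right].$$
Since $0\le(q^{-1}|\A|)^{\ep_{\gamma\tau}'}\le 1$ and the product lies in $[0,1]$, the difference between the left-hand side and the main term $(q^{-1}|\A|)^{\ep_{\gamma\tau}'}$ is bounded in absolute value by $2\,\P[\mathcal{G}^c]$.

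It therefore remains to prove $\P[\mathcal{G}^c]=O_\l(q^{-r})$, which is the crux of the argument. By a union bound over the at most $\l$ vertices $j\in V_\tau$, it suffices to bound, for each $j$, the probability that $d_j\le u_\gamma\le\l$ independent uniform vectors in $\F_q^r$ are linearly dependent. A standard finite-field count gives that this probability equals $1-\prod_{s=0}^{d_j-1}(1-q^{s-r})$; when $r\ge\l$ I would bound it by $\sum_{s=0}^{d_j-1}q^{s-r}<q^{\l}q^{-r}$, while for the finitely many cases $r<\l$ the bound is trivial since there $q^{-r}\ge q^{-\l}$ and any probability is at most $1$. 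Summing over $j$ yields $\P[\mathcal{G}^c]=O_\l(q^{-r})$ uniformly, which completes the proof. I expect the main obstacle to be the bookkeeping in the conditioning step, namely verifying cleanly that on $\mathcal{G}$ the conditional expectation factors and evaluates exactly to $(q^{-1}|\A|)^{\ep_{\gamma\tau}'}$ via surjectivity of the evaluation maps, rather than the linear-dependence estimate, which is routine.
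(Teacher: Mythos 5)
Your proof is correct and follows essentially the same route as the paper's: collapse multi-edges by idempotence of $\mathbbm{1}_\A$, condition on the rows $(\x_i)_{i\in U_\gamma}$, factor the conditional expectation over the columns $j\in V_\tau$, and evaluate each factor as $(q^{-1}|\A|)^{d_j}$ on a linear-independence event whose complement has probability $O_\l(q^{-r})$. The only (immaterial) difference is that the paper conditions on the single global event that all of $\{\x_s : s\in U_\gamma\}$ are linearly independent and computes its probability exactly by a rank count, whereas you require independence only of each neighborhood $\{\x_i : i\sim j\}$ and use a union bound; both give the same error term.
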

\begin{proof}
    Denote
    $$W_{\gamma\tau}:=\E\left[\prod_{k=1}^\l \mathbbm{1}_\A(\x_{\gamma(k)}\cdot \y_{\tau(k)})\right].$$
    If there are multi-edges in $G_{\gamma \tau}$, say $\overline{\gamma(k_2)\tau(k_2)}=\overline{\gamma(k_1)\tau(k_1)}$ for some $k_2 > k_1$, that is $\gamma(k_2)=\gamma(k_1)$ and $\tau(k_2)=\tau(k_1)$, then clearly
    \[\mathbbm{1}_\A(\x_{\gamma(k_1)}\cdot \y_{\tau(k_1)}) \mathbbm{1}_\A(\x_{\gamma(k_2)}\cdot \y_{\tau(k_2)})=\mathbbm{1}_\A(\x_{\gamma(k_1)}\cdot \y_{\tau(k_1)}), \]
    we can remove the index $k_2$ in the computation of $W_{\gamma \tau}$, or equivalently we can remove the edge $\overline{\gamma(k_2)\tau(k_2)}$ from $G_{\gamma \tau}$ without affecting the computation of $W_{\gamma \tau}$. So let us assume that there is no multi-edge in $G_{\gamma \tau}$, hence $\ep_{\gamma\tau}'=\l$ and for any distinct $k_1,k_2 \in [1\isep \l]$, either $\gamma(k_1) \neq \gamma(k_2)$ or $\tau(k_1) \neq \tau(k_2)$.

    We may first write
    \begin{equation}\label{W2}
        W_{\gamma\tau}=\E\left[\E\left[\prod_{k=1}^\l \mathbbm{1}_\A(\x_{\gamma(k)}\cdot \y_{\tau(k)})\bigg|\x_{\gamma(1)}, \x_{\gamma(2)}\cdots, \x_{\gamma(\l)} \right] \right].
    \end{equation}

    For each $t \in V_\tau$, define $S_t:=\tau^{-1}(t)$ and $s_t:=\#S_t$. By assumption, $\gamma$ is one-to-one on $S_t$ for each $t$. Moreover, $\cup_{t \in V_\tau} S_t=[1\isep \l]$ and so $\sum_{t \in V_\tau} s_t=\l$.

    Since $\tau(k)=t$ for any $k \in S_t$, we can write (\ref{W2}) as
    \begin{equation}\label{W3}
        W_{\gamma\tau}=\E\left[\prod_{t \in V_\tau}\P\left[\x_{\gamma(k)}\cdot\y_t \in \A \quad \forall k \in S_t\bigg|\x_{\gamma(1)}, \x_{\gamma(2)}\cdots, \x_{\gamma(\l)}\right]\right].
    \end{equation}

    To compute the inner term $\P\left[\x_{\gamma(k)}\cdot\y_t \in \A \quad \forall k \in S_t\bigg|\x_{\gamma(1)}, \x_{\gamma(2)}\cdots, \x_{\gamma(\l)}\right]$, suppose $\x_{\gamma(k)}=\a_k$ for each $k \in S_t$ and the vectors $\a_k \in \F_q^r$  for $k \in S_t$ are linearly independent over $\F_q$. Then for any fixed $b_k \in \A$ where $k \in S_t$, the system of equations
    $$\a_k \cdot \y=b_k, \quad \forall k \in S_t,$$
    can be rewritten as
    $$\mathbf{A} \y^T=\b,$$
    where 
    $$
    \mathbf{A}=\begin{bmatrix} 
    \a_1\\
    \vdots\\
    \a_{s_t}
    \end{bmatrix}, \quad \b=\begin{bmatrix} b_1\\
    \vdots\\
    b_{s_t}
    \end{bmatrix}.$$
More generally, the system
   $$\a_k\cdot \y \in \A, \quad \forall k \in S_t$$
   has $q^{r-s_t}|\A|^{s_t}=q^r(q^{-1}|\A|)^{s_t}$ solutions for $\y \in \F_q^r$. This implies that
   \begin{equation}\label{P1}
   \P[\x_{\gamma(k)}\cdot\y_t \in \A \quad \forall k \in S_t|\x_{\gamma(k)}=\a_k \quad \forall k \in S_t]=\frac{q^r(q^{-1}|\A|)^{s_t}}{q^r}=(q^{-1}|\A|)^{s_t}, 
   \end{equation}
   whenever the vectors $\a_k$ for $k \in S_t$ are linearly independent over $\F_q$. 

   On the other hand, let 
   \begin{align*}
   p&:=\P\big[\{\x_s: s \in U_\gamma\} \text{ linearly independent}\big]. 
   \end{align*}
   Since $\x_1,\cdots,\x_{\l}$ are all uniformly and independently distribution in $\F_q^r$, we can obtain 
   \begin{align*}
   p   &=\P\big[[\x_s]_{s \in U_\gamma} \in \F_q^{r \times u_\gamma,u_\gamma}\big] 
   =\frac{\prod_{i=0}^{u_\gamma-1}(q^r-q^i)}{q^{ru_\gamma}}\\  
   &=\prod_{i=0}^{u_\gamma-1}(1-q^{i-r}) 
   =1+O_\l(q^{-r}).
   \end{align*}
   Hence, using (\ref{P1}) and (\ref{W3}) we have 
    \begin{align*}
        W_{\gamma\tau}&=p \cdot \left(\prod_{t \in V_\tau}\P\big[\x_{\gamma(k)}\cdot\y_t \in \A \quad \forall k \in S_t|\{\x_s: s \in U_\gamma\} \text{ linearly independent}\big]\right)+\\
        &(1-p)\cdot \left(\prod_{t \in V_\tau}\P[\x_{\gamma(k)}\cdot\y_t \in \A \quad \forall k \in S_t|\{\x_s: s \in U_\gamma\} \text{ linearly dependent}]\right)\\
        &=(1+O_\l(q^{-r}))\prod_{t \in V_\tau} (q^{-1}|\A|)^{s_t}+O_\l(q^{-r})\\
        &=(q^{-1}|\A|)^\l+O_\l(q^{-r}).
    \end{align*}
As $\ep_{\gamma \tau}'=\l$, this proves Lemma \ref{W'} when $G_{\gamma \tau}$ is a simple graph. By the reduction process as described in the beginning, this completes the proof of Lemma \ref{W'}. 
\end{proof}

\begin{proof}[Proof of Lemma \ref{W0}]
    When $\l=1$, it is obvious that 
    $$W_{\gamma\tau}=\E[\mathbbm{1}_\A(\x_{\gamma(1)}\cdot\y_{\tau(1)})-\E[\mathbbm{1}_\A(\x_{\gamma(1)}\cdot\y_{\tau(1)})]]=0.$$ 

    When $\l \geq 2$ and all edges in $G_{\gamma\tau}$ are multiple, it is also trivial that $|W_{\gamma \tau}| \ll_\l 1$, as each inner term in the expression (\ref{E3}) of $W_{\gamma \tau}$ is bounded by 2. 

    Let us assume that $\l \geq 2$ and at least one edge in $G_{\gamma\tau}$ is simple. Without loss of generality, assume $e_1=\overline{\gamma(1)\tau(1)}$ is a simple edge in $G_{\gamma\tau}$.

    For any subset $\S \subset [1\isep \l]$, let $(\gamma_\S,\tau_\S)$ denote the restriction of $(\gamma,\tau)$ on $\S$, and $G_{\gamma_\S\tau_\S}$ the bipartite multi-graph resulting from $(\gamma_{\S},\tau_{\S})$. In particular, if $1 \in \S$, then $e_1$ is also a simple edge in $G_{\gamma_\S\tau_\S}$.

    Now let $\S \subset [2\isep \l]$ and denote $\widetilde{\S}:=\S \cup \{1\}$. Then $\ep_{\gamma_{\widetilde{\S}}\tau_{\widetilde{\S}}}'=\ep_{\gamma_{\S}\tau_{\S}}'+1$ since the edge $e_1$ does not appear in $G_{\gamma_{\S}\tau_{\S}}$. We then have
    \begin{align*}
    &\E\left[\Big\{\mathbbm{1}_\A(\x_{\gamma(1)}\cdot\y_{\gamma(1)})-\E[\mathbbm{1}_\A(\x_{\gamma(1)}\cdot\y_{\gamma(1)})] \Big\}\prod_{k \in \S}\mathbbm{1}_\A(\x_{\gamma(k)}\cdot\y_{\tau(k)})\right]\\
    &=W_{\gamma_{\widetilde{\S}}\tau_{\widetilde{\S}}}-W_{\gamma_{\S}\tau_{\S}}\E[\mathbbm{1}_\A(\x_{\gamma(1)}\cdot\y_{\tau(1)})]\\
    &=\left(q^{-1}|\A|\right)^{\ep_{\gamma_{\widetilde{\S}}\tau_{\widetilde{\S}}}'}+O_\l(q^{-r})-\big[(q^{-1}|\A|)^{\ep_{\gamma_{\S}\tau_{\S}}'}+O_\l(q^{-r})\big]\left(q^{-1}|\A|+q^{-r}\alpha_\A\right)\\
    &=\left(q^{-1}|\A|\right)^{\ep_{\gamma_{\widetilde{\S}}\tau_{\widetilde{\S}}}'}-\left(q^{-1}|\A|\right)^{\ep_{\gamma_{\S'}\tau_{\S'}}'+1}+O_\l(q^{-r})\\
    &=O_\l(q^{-r}).
    \end{align*}
    By (\ref{E3}) and the inclusion-exclusion formula, we obtain
    \begin{align*}
        W_{\gamma\tau}&= \E\left[\Big\{\mathbbm{1}_\A(\x_{\gamma(1)}\cdot\y_{\tau(1)})-\E[\mathbbm{1}_\A(\x_{\gamma(1)}\cdot\y_{\tau(1)})]\Big\}\prod_{k=2}^\l \Big\{\mathbbm{1}_\A(\x_{\gamma(k)}\cdot\y_{\tau(k)})-\E[\mathbbm{1}_\A(\x_{\gamma(k)}\cdot\y_{\tau(k)})]\Big\}\right]\\
        &=\sum_{\S \subset [2 \isep \l]} \E\left[\Big\{\mathbbm{1}_\A(\x_{\gamma(1)}\cdot\y_{\gamma(1)})-\E[\mathbbm{1}_\A(\x_{\gamma(1)}\cdot\y_{\gamma(1)})]\Big\}\prod_{k \in \S}\mathbbm{1}_\A(\x_{\gamma(k)}\cdot\y_{\tau(k)})\right]\times\\
        &(-1)^{\l-1-\#\S}\prod_{k' \in [2\isep \l] \setminus \S}\E\left[\mathbbm{1}_\A(\x_{\gamma(k')}\cdot\y_{\tau(k')})\right]\\
        &=\sum_{\S \subset [2\isep \l]}O_\l(q^{-r}) =O_\l(q^{-r})
        \end{align*}
    as desired. This completes the proof of Lemma \ref{W0}.
\end{proof}

Decompose the multi-graph $G_{\gamma\tau}$ into connected components 
$$G_{\gamma \tau}=\sqcup_{i=1}^\kappa G_{\gamma_i\tau_i}.$$ 
Here $\kappa:=\kappa_{\gamma \tau}$ is the number of connected components, and for each $i$, $G_{\gamma_i\tau_i}=(U_{\gamma_i},V_{\tau_i},E_{\gamma_i \tau_i})$ is the $i$-th component which is also a bipartie multi-graph arising from $(\gamma_i,\tau_i) \in \Gamma_{\l_i}$. We have the relations  
\[U_{\gamma}=\sqcup_i U_{\gamma_i}, \quad V_{\tau}=\sqcup_i V_{\tau_i}, \quad E_{\gamma \tau}=\sqcup_i E_{\gamma_i \tau_i}, \quad E_{\gamma \tau}'=\sqcup_i E_{\gamma_i \tau_i}', \]
\[\ep_{\gamma_i \tau_i}'=\#E_{\gamma_i \tau_i}' \le \# E_{\gamma_i \tau_i}=\l_i \quad \forall i, \]
and 
 $$\sum_{i=1}^\kappa \l_i=\l.$$. 
Due to the fact that $\x_i$ and $\x_{i'}$ for $i \neq i'$ (resp. $\y_j$ and $\y_{j'}$ for $j \neq j'$) are uniformly and independently distributed in $\F_q^r$, we have 
\begin{equation}\label{Prod}
W_{\gamma\tau}=\prod_{i=1}^\kappa W_{\gamma_i\tau_i},
\end{equation}
where each $W_{\gamma_i \tau_i}$ can be estimated by Lemma \ref{W0}. 

Each connected component $G_{\gamma_i \tau_i}$ falls into one of the following three types:
\begin{enumerate}
    \item[$T_0$:] The component consists of only one edge, which is simple;
    \item[$T_1$:] The component has at least two edges, and at least one edge is simple;
    \item[$T_2$:] All edges in the component are multiple edges.
\end{enumerate}
For each $(\gamma, \tau) \in \Gamma_{\l}$, denote by $\kappa_{i}:=\kappa_{\gamma \tau,i}$ the number of connected components of Type $T_i$ in $G_{\gamma\tau}$ for $i=0,1,2$, so we have 
$$\kappa=\sum_{i=0}^2\kappa_{i}.$$ 
According to (\ref{Prod}) and applying Lemma \ref{W0} to each connected component $G_{\gamma_i \tau_i}$, the estimation of $W_{\gamma\tau}$ can be refined as follows:
\begin{lemma}\label{W}
    For any $(\gamma,\tau) \in \Gamma_\l$, we have
    $$W_{\gamma\tau}=\begin{cases}
        0 &(\kappa_{0} \ge 1)\\
        O_\l(q^{-r\kappa_{1}}) &(\kappa_{0}=0).
    \end{cases}$$
\end{lemma}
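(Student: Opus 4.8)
The plan is to assemble the statement directly from the product formula (\ref{Prod}) together with the component-wise estimate of Lemma \ref{W0}. Writing $W_{\gamma\tau}=\prod_{i=1}^\kappa W_{\gamma_i\tau_i}$, I would classify each factor according to which of the three types $T_0,T_1,T_2$ its component belongs to, and simply read off the corresponding bound from Lemma \ref{W0} applied to the sub-map $(\gamma_i,\tau_i)\in\Gamma_{\l_i}$.

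Concretely: a component of type $T_0$ is a single simple edge, so $\l_i=1$, and Lemma \ref{W0} gives $W_{\gamma_i\tau_i}=0$; a component of type $T_1$ has $\l_i\ge 2$ with at least one simple edge, so Lemma \ref{W0} gives $W_{\gamma_i\tau_i}=O_\l(q^{-r})$; a component of type $T_2$ has $\l_i\ge 2$ with every edge multiple, so Lemma \ref{W0} gives only the weaker bound $W_{\gamma_i\tau_i}=O_\l(1)$. Combining these through (\ref{Prod}) settles both cases: if $\kappa_0\ge 1$ then at least one factor vanishes and hence $W_{\gamma\tau}=0$; if $\kappa_0=0$ then the $\kappa_1$ factors of type $T_1$ each contribute $O_\l(q^{-r})$ while the $\kappa_2$ factors of type $T_2$ each contribute $O_\l(1)$, so that
$$|W_{\gamma\tau}|=\prod_{i=1}^\kappa |W_{\gamma_i\tau_i}|=O_\l\!\left(q^{-r\kappa_1}\right),$$
which is precisely the second case.

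The substance of the argument has already been done in Lemma \ref{W0} and in the product decomposition (\ref{Prod}); what remains is genuinely an assembly step, and the only point needing a little care is the bookkeeping of the implied constants. Each per-component bound from Lemma \ref{W0} carries a constant depending on $\l_i$, but since $\l_i\le\l$ and since the number of components satisfies $\kappa=\kappa_0+\kappa_1+\kappa_2\le\l$ (each connected component uses at least one of the $\l$ edges), a product of at most $\l$ such factors keeps a constant depending only on $\l$; replacing each $\l_i$-constant by the maximum over $\l_i\in\{1,\dots,\l\}$ makes this uniform. I expect this constant-tracking to be the only mild obstacle, and it is entirely routine.
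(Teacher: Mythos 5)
Your proposal is correct and follows exactly the paper's route: the paper derives Lemma \ref{W} precisely by combining the product formula (\ref{Prod}) with a component-wise application of Lemma \ref{W0}, which is what you do. Your additional remark on uniformizing the $O_{\l_i}$ constants (via $\l_i\le\l$ and $\kappa\le\l$) is sound bookkeeping that the paper leaves implicit.
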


\section{Proof of Theorem \ref{M}} \label{high}

We remark that Theorem \ref{M} under the cases $\l=1,2$ are immediate by Lemma \ref{musigma}. Hence in the following we assume $\l \geq 3$.

Define $\Gamma_\l^0,\Gamma_\l^1,\Gamma_\l^2$ and $\Gamma_\l^3$ as follows:
\begin{align*}
    \Gamma_\l^0&:=\left\{(\gamma,\tau) \in \Gamma_\l: \kappa_{0} > 0\right\},\\
    \Gamma_\l^1&:=\left\{(\gamma,\tau) \in \Gamma_\l: \kappa_{0}=0, \kappa=\l/2\right\},\\
    \Gamma_\l^2&:=\left\{(\gamma,\tau) \in \Gamma_\l: \kappa_{0}=\kappa_{1}=0, \kappa < \l/2\right\},\\
    \Gamma_\l^3&:=\left\{(\gamma,\tau) \in \Gamma_\l: \kappa_{0}=0, \kappa_{1} > 0, \kappa < \l/2\right\}.
\end{align*}
Note that if $\kappa > \l/2$, then we must have $\kappa_{0} > 0$. Hence we see that $\Gamma_\l=\sqcup_{i=0}^3 \Gamma_\l^i$. For $i \in [0\isep 3]$, we define the sum
$$M_i:=\frac{1}{(\sigma_\A^2(q,m,n,r))^{\l/2}}\sum_{(\gamma,\tau) \in \Gamma_\l^i}m^{u_\gamma} n^{v_\tau} W_{\gamma\tau}.$$
Then by (\ref{E4}) and the fact that
$$\frac{m!}{(m-u)!}=m^u\left(1+O_u\left(\frac{1}{m}\right)\right),$$
we have
\begin{equation}\label{W1}
   \E[\cts(\mathbf{XY})^\l]=\sum_{i=0}^3 M_i\left(1+O_\l\left(\frac{1}{N}\right)\right).
\end{equation}
Here for the sake of simplicity we define $N:=\min\{m,n\}$. 

In what follows, we estimate $M_i$'s for each $i \in [0\isep 3]$. 

\subsection{$M_0$}
This is trivial: by Lemma \ref{W}, we immediately have $M_0=0$.

\subsection{$M_1$} 

For each $(\gamma,\tau) \in \Gamma_{\l}^1$, $\kappa=\l/2$ is a positive integer, so if $\l$ is odd, then $\Gamma_{\l}^1=\emptyset$ and we have $M_1=0$. 

Now let us assume that $\l$ is even. This means that each connected component $G_{\gamma_i\tau_i}$ has exactly two edges (counted with multiplicity),   which is either of type $T_1$ or of type $T_2$, according to whether it is a tree or a double edge. See pictures below: if it is of type $T_1$, it is either graph C1 or C2; if it is of type $T_2$, then it is graph D.

\begin{tikzpicture}[scale=1.5]

    \node[circle, draw, fill=blue!20, minimum size=0.4cm, inner sep=0pt] (C) at (0, 0) {};
    \node[circle, draw, fill=red!20, minimum size=0.4cm, inner sep=0pt] (D) at (1, 0) {};
    \node[circle, draw, fill=red!20, minimum size=0.4cm, inner sep=0pt] (E) at (0.5, 1) {};
    
    \draw[thick] (C) -- (D);
    \draw[thick] (C) -- (E);

    \node at (C) {1};
    \node at (D) {2};
    \node at (E) {1};

    \node at (0.5, -0.5) {C1: Type $T_1$};

\end{tikzpicture}
\hspace{2cm} 
\begin{tikzpicture}[scale=1.5]

    \node[circle, draw, fill=blue!20, minimum size=0.4cm, inner sep=0pt] (C) at (0, 0) {};
    \node[circle, draw, fill=red!20, minimum size=0.4cm, inner sep=0pt] (D) at (1, 0) {};
    \node[circle, draw, fill=blue!20, minimum size=0.4cm, inner sep=0pt] (E) at (0.5, 1) {};
    
    \draw[thick] (C) -- (D);
    \draw[thick] (D) -- (E);

    \node at (C) {2};
    \node at (D) {1};
    \node at (E) {1};

    \node at (0.5, -0.5) {C2: Type $T_1$};

\end{tikzpicture}
\hspace{2cm}
\begin{tikzpicture}[scale=1.5]

    \node[circle, draw, fill=blue!20, minimum size=0.4cm, inner sep=0pt] (A) at (0, 0) {};
    \node[circle, draw, fill=red!20, minimum size=0.4cm, inner sep=0pt] (B) at (1, 0) {};
    
    \draw[thick, bend left] (A) to (B);
    \draw[thick, bend right] (A) to (B); 

    \node at (A) {1};
    \node at (B) {1};

    \node at (0.5, -0.5) {D: Type $T_2$};

\end{tikzpicture}

Since all the vectors $\x_i,\y_j$ are uniformly and independently distributed in $\F_q^r$, it is easy to see that 
\begin{itemize}
\item if $G_{\gamma_i\tau_i} = \text{C1}$, then 
\[W_{\gamma_i\tau_i}=\E \Big[\big\{\mathbbm{1}_\A(\x_1\cdot\y_1)-\E[\mathbbm{1}_\A(\x_1\cdot\y_1)]\big\}\big\{\mathbbm{1}_\A(\x_1\cdot\y_2)-\E[\mathbbm{1}_\A(\x_1\cdot\y_2)]\big\}\Big]; \]

\item if $G_{\gamma_i\tau_i} = \text{C2}$, then 
\[W_{\gamma_i\tau_i}=\E \Big[\big\{\mathbbm{1}_\A(\x_1\cdot\y_1)-\E[\mathbbm{1}_\A(\x_1\cdot\y_1)]\big\}\big\{\mathbbm{1}_\A(\x_2\cdot\y_1)-\E[\mathbbm{1}_\A(\x_2\cdot\y_1)]\big\}\Big]; \]

\item if $G_{\gamma_i\tau_i} = \text{D}$, then 
\[W_{\gamma_i\tau_i}=\E \Big[\big\{\mathbbm{1}_\A(\x_1\cdot\y_1)-\E[\mathbbm{1}_\A(\x_1\cdot\y_1)]\big\}^2\Big]; \]

\end{itemize}
As we have seen in the proof of Lemma \ref{musigma}, we can obtain 
\begin{eqnarray} \label{5:wgamma} W_{\gamma_i\tau_i}=\left\{\begin{array}{ll}
q^{-r}A & (G_{\gamma_i\tau_i} \in \{ \text{C1, C2}\}),\\
B & (G_{\gamma_i\tau_i}= \text{D}), 
\end{array}
\right.\end{eqnarray}
where we define 
\begin{eqnarray*} 
A:&=& (1-q^{-r})\gaa^2, \\
B:&=&\left(q^{-1}|\A|-q^{-r}\gaa\right)\left(1-q^{-1}|\A|+q^{-r}\gaa\right). 
\end{eqnarray*}
It shall be noted that both $A$ and $B$ are of order 1 as $r \to +\infty$, and 
\begin{equation}\label{S}
\sigma_\A^2(q,m,n,r)=mn\left[(m+n-2)q^{-r}A+B\right].
\end{equation}
Suppose 
\begin{eqnarray} \label{5:decom} [1 \isep \kappa]=K_{11} \sqcup K_{12} \sqcup K_2 \end{eqnarray}
such that 
\[ \left\{
\begin{array}{ll} 
G_{\gamma_i\tau_i} =\text { C1 } & (i \in K_{11}), \\
G_{\gamma_i\tau_i} =\text { C2 } & (i \in K_{12}), \\
G_{\gamma_i\tau_i} =\text { D } & (i \in K_{2}). 
\end{array} \right.\] 
Denote 
\[ \#K_{11}=\kappa_{11}, \quad \#K_{12}=\kappa_{12}, \quad \#K_2=\kappa_2. \] 
These $\kappa$'s satisfy
$$\kappa_{11}+\kappa_{12}+\kappa_2=\kappa_1+\kappa_2=\l/2.$$
From (\ref{5:wgamma}) we obtain
$$W_{\gamma\tau}=\prod_{i} W_{\gamma_i \tau_i}=\left(q^{-r}A\right)^{\kappa_{11}+\kappa_{12}}B^{\kappa_{2}}. $$
Since 
\begin{itemize}
\item if $G_{\gamma_i\tau_i}$=C1, then $u_{\gamma_i}=1, v_{\tau_i}=2$, 

\item if $G_{\gamma_i\tau_i}$=C2, then $u_{\gamma_i}=2, v_{\tau_i}=1$, 

\item if $G_{\gamma_i\tau_i}$=D, then $u_{\gamma_i}=1, v_{\tau_i}=1$, 
\end{itemize}
we have
$$u_\gamma=\sum_{i=1}^\kappa u_{\gamma_i}=\kappa_{11}+2\kappa_{12}+\kappa_{2}=\l/2+\kappa_{12}$$
and
$$v_\tau=\sum_{i=1}^\kappa v_{\tau_i}=2\kappa_{11}+\kappa_{12}+\kappa_{2}=\l/2+\kappa_{11}.$$
Denote by $\Gamma_\l^1(K_{11},K_{12},K_2)$ the set of those $(\gamma,\tau) \in \Gamma_\l^1$ associated to the decomposition (\ref{5:decom}). A little thought reveals that the quantity $\#\Gamma_{\l}^1(K_{11},K_{12},K_2)$ counts exactly the number of ways to partition $[1\isep \l]$ into $\l/2$ disjoint two-element subsets (each of which corresponds to the indices of the two edges in a single connected component of $G_{\gamma\tau}$). So we have 
\begin{eqnarray} \label{5:gcount} \sum_{(\gamma,\tau) \in \Gamma_\l^{1}(K_{11},K_{12},K_2)}1=(\l-1)!!.\end{eqnarray}
Now using the decomposition  
\[
\Gamma_{\l}^1=\bigsqcup_{K_{11}, K_{12},K_2} \Gamma_\l^1(K_{11},K_{12},K_2), \]
we have 
\begin{align*}
    M_1&=\frac{1}{(\sigma_\A^2(q,m,n,r))^{\l/2}}\sum_{K_{11},K_{12},K_2} \, \sum_{(\gamma,\tau) \in \Gamma_{\l}^1(K_{11},K_{12},K_2)} m^{u_{\gamma}}n^{v_{\tau}}W_{\gamma \tau} \\
	&=\frac{1}{(\sigma_\A^2(q,m,n,r))^{\l/2}}\sum_{K_{11},K_{12},K_2} \, \sum_{(\gamma,\tau) \in \Gamma_{\l}^1(K_{11},K_{12},K_2)} m^{\kappa_{11}+2\kappa_{12}+\kappa_{2}}n^{2\kappa_{11}+\kappa_{12}+\kappa_{2}}\left(q^{-r}A\right)^{\kappa_{11}+\kappa_{12}}B^{\kappa_{2}}\\			&=\frac{1}{(\sigma_\A^2(q,m,n,r))^{\l/2}}\sum_{K_{11},K_{12},K_2} \, \sum_{(\gamma,\tau) \in \Gamma_{\l}^1(K_{11},K_{12},K_2)} m^{\l/2+\kappa_{12}}n^{\l/2+\kappa_{11}}\left(q^{-r}A\right)^{\kappa_{11}+\kappa_{12}}B^{\l/2-\kappa_{11}-\kappa_{12}}\\
	&=\frac{(mn)^{\l/2}}{(\sigma_\A^2(q,m,n,r))^{\l/2}}\sum_{\kappa_{11},\kappa_{12}} m^{\kappa_{12}}n^{\kappa_{11}}\left(q^{-r}A\right)^{\kappa_{11}+\kappa_{12}}B^{\l/2-\kappa_{11}-\kappa_{12}} \sum_{\substack{K_{11},K_{12},K_2\\
	\#K_{11}=\kappa_{11}\\
	\#K_{12}=\kappa_{12}}} \, \sum_{(\gamma,\tau) \in \Gamma_{\l}^1(K_{11},K_{12},K_2)}1. 
\end{align*}
By using Identity (\ref{5:gcount}) we have 
\begin{align*}
    M_1&=\frac{(mn)^{\l/2}(\l-1)!!}{(\sigma_\A^2(q,m,n,r))^{\l/2}}\sum_{\kappa_{11},\kappa_{12}} m^{\kappa_{12}}n^{\kappa_{11}}\left(q^{-r}A\right)^{\kappa_{11}+\kappa_{12}}B^{\l/2-\kappa_{11}-\kappa_{12}} \sum_{\substack{K_{11},K_{12},K_2\\
	\#K_{11}=\kappa_{11}\\
	\#K_{12}=\kappa_{12}}} 1.
	\end{align*}
Noting that subject to Condition (\ref{5:decom}) on $K_{11},K_{12}$ and $K_2$, we have 
	\[\sum_{\substack{K_{11},K_{12},K_2\\
	\#K_{11}=\kappa_{11}\\
	\#K_{12}=\kappa_{12}}} 1=\binom{\l/2}{\kappa_{11}+\kappa_{12}} \binom{\kappa_{11}+\kappa_{12}}{\kappa_{11}}, \]
we can now compute $M_1$ by simple applications of the binomial theorem $(a+b)^n=\sum_{i=0}^n \binom{n}{i} a^ib^{n-i}$: setting $\kappa_1=\kappa_{11}+\kappa_{12}$ so that $\kappa_{12}=\kappa_1-\kappa_{11}$, noting that $0 \le \kappa_{11} \le \kappa_1 \le \l/2$ and change the order of summation, we can further obtain 
\begin{align*}
    M_1&=\frac{(mn)^{\l/2}(\l-1)!!}{(\sigma_\A^2(q,m,n,r))^{\l/2}}\sum_{\kappa_{1}=0}^{\l/2} \binom{\l/2}{\kappa_1}\left(q^{-r}A\right)^{\kappa_{1}}B^{\l/2-\kappa_{1}}\sum_{\kappa_{11}=0}^{\kappa_1} \binom{\kappa_1}{\kappa_{11}} m^{\kappa_1-\kappa_{11}}n^{\kappa_{11}} \\
	&=\frac{(mn)^{\l/2}(\l-1)!!}{(\sigma_\A^2(q,m,n,r))^{\l/2}}\sum_{\kappa_{1}=0}^{\l/2} \binom{\l/2}{\kappa_1} \left((m+n)q^{-r}A\right)^{\kappa_{1}}B^{\l/2-\kappa_{1}}\\
    &=\frac{(\l-1)!!\left\{mn[(m+n)q^{-r}A+B]\right\}^{\l/2}}{(\sigma_\A^2(q,m,n,r))^{\l/2}}. 
\end{align*}
Using the value of $\sigma_\A^2(q,m,n,r)$ given in (\ref{S}), it is straightforward to obtain 

$$M_1 =(\l-1)!!+O_\l\left(\frac{1}{m+n}\right).$$ 

\subsection{$M_2$} 
Decompose $G_{\gamma \tau}$ into connected components
\[G_{\gamma \tau} =\sqcup_{i=1}^{\kappa} G_{\gamma_i \tau_i},\]
where $G_{\gamma_i \tau_i}=(U_{\gamma_i},V_{\tau_i}, E_{\gamma_i \tau_i})$ is the graph associated to $(\gamma_i, \tau_i )\in \Gamma_{\l_i}$ for $1 \le i \le \kappa$. 

If $(\gamma,\tau) \in \Gamma_{\l}^2$, then $\kappa <\l/2$ and each $G_{\gamma_i \tau_i}$ is of type $T_2$, that is, each edge of $G_{\gamma_i\tau_i}$ is a multi-edge. We first see that $W_{\gamma_i\tau_i}=O_\l(1)$ for each $i$ by Lemma \ref{W0}. Next, since each component only has multiple edges and is connected, we have $\ep_{\gamma_i\tau_i}' \leq \l_{i}/2$, and hence $u_{\gamma_i}+v_{\tau_i} \leq \ep_{\gamma_i\tau_i}+1 \leq  \l_{i}/2+1$ for all $i$. As $\sum_i \l_i=\l$, this implies that 
$$u_{\gamma}+v_{\tau} =\sum_i \left(u_{\gamma_i}+v_{\tau_i} \right)\leq \l/2+\kappa.$$
So we have 
\begin{align*}
    M_2&=\frac{1}{(\sigma_\A^2(q,m,n,r))^{\l/2}}\sum_{(\gamma,\tau) \in \Gamma_{\l}^2} m^{u_{\gamma}} n^{v_{\tau}}\prod_{i=1}^{\kappa}W_{\gamma_i \tau_i} 
    \\
&\ll_\l\frac{1}{(\sigma_\A^2(q,m,n,r))^{\l/2}}\sum_{\kappa=1}^{\lfloor (\l-1)/2\rfloor}\sum_{u}\sum_{v}m^u n^{v}\sum_{\substack{(\gamma,\tau) \in \Gamma_\l^2 \\ u_\gamma=u \\ v_{\tau}=v}} 1\\
&\ll_\l\frac{1}{(\sigma_\A^2(q,m,n,r))^{\l/2}}\sum_{\kappa=1}^{\lfloor (\l-1)/2\rfloor}\sum_{u}\sum_{v}m^u n^{v}.
\end{align*}
Here the constraints on $u,v$ are $u,v \ge \kappa$ and $u+v \le \l/2+\kappa$. We can obtain 
\begin{align*}
    M_2    &\ll_\l \frac{1}{(\sigma_\A^2(q,m,n,r))^{\l/2}}\sum_{\kappa=1}^{\lfloor (\l-1)/2\rfloor}(mn)^\kappa(m+n)^{\lfloor\l/2\rfloor-\kappa}\\
    &\ll_\l \frac{(m+n)^{\lfloor\l/2\rfloor}}{(\sigma_\A^2(q,m,n,r))^{\l/2}}\sum_{\kappa=1}^{\lfloor (\l-1)/2\rfloor} \left(\frac{mn}{m+n}\right)^\kappa\\
    &\ll_\l \frac{(m+n)^{\lfloor\l/2\rfloor}N^{\lfloor (\l-1)/2\rfloor}}{(mn)^{\l/2}} \ll_\l \frac{1}{N}.
\end{align*}

\subsection{$M_3$} 

Decompose $G_{\gamma \tau}$ into connected components
\[G_{\gamma \tau} =\sqcup_{i=1}^{\kappa} G_{\gamma_i \tau_i},\]
where $G_{\gamma_i \tau_i}=(U_{\gamma_i},V_{\tau_i}, E_{\gamma_i \tau_i})$ is the graph associated to $(\gamma_i, \tau_i )\in \Gamma_{\l_i}$ for $1 \le i \le \kappa$. 

For $(\gamma,\tau) \in \Gamma_{\l}^3$, each component $G_{\gamma_i\tau_i}$ is either of type $T_1$ or of type $T_2$. Let 
\[[1 \isep \kappa]=K_1 \sqcup K_2,\]
where 
\[G_{\gamma_i \tau_i} \text{ is } \left\{
\begin{array}{ll} 
\text{ of type } T_1 & (i \in K_1),\\ 
\text{ of type } T_2 & (i \in K_2),
\end{array}\right.\]
and 
$$\kappa_1=\#K_1>0, \quad \kappa_2=\#K_2, \quad \kappa_1+\kappa_2 =\kappa<\l/2.$$ 
Since each $G_{\gamma_i\tau_i}$ is connected, we have 
\begin{itemize}
\item if $i \in K_1$, then $u_{\gamma_i}+v_{\tau_i} \le \l_i+1$, 

\item if $i \in K_2$, then $u_{\gamma_i}+v_{\tau_i} \le \l_i/2+1$. 
\end{itemize}
Define $$\l_1:=\sum_{i \in K_1}\l_{i}, \quad \l_2:=\sum_{i \in K_2}\l_{i}.$$ 
We have
\[\l_1+\l_2=\l, \quad \l_1 \ge 2 \kappa_1, \quad \l_2 \ge 2 \kappa_2.\]
Then $2\kappa_{1} \leq \l_1 \leq \l-2\kappa_{2}$ since $\kappa_{\gamma\tau,0}=0$. In addition,
\begin{eqnarray} \label{5:m3} u_\gamma \geq \kappa, v_\tau \geq \kappa, u_\gamma+v_\tau =\sum_i\left(u_{\gamma_i}+v_{\tau_i}\right)\leq \l_1+\l_2/2+\kappa=(\l_1+\l)/2+\kappa.\end{eqnarray}
We can write 
\begin{align}
    M_3&=\frac{1}{(\sigma_\A^2(q,m,n,r))^{\l/2}} \sum_{(\gamma,\tau) \in \Gamma_{\l}^3} m^{u_{\gamma}}n^{v_{\tau}} \prod_{i=1}^{\kappa} W_{\gamma_i\tau_i} \nonumber\\
    &\ll_\l\frac{1}{(\sigma_\A^2(q,m,n,r))^{\l/2}}\sum_{\kappa=1}^{\lfloor (\l-1)/2\rfloor} \sum_{u_{\gamma},v_{\tau},\kappa_1}m^{u_{\gamma}} n^{v_{\tau}}q^{-r\kappa_1} \sum_{\substack{(\gamma,\tau) \in \Gamma_\l^3 \\ u_{\gamma}=u\\
    v_\tau=v\\
    \#K_1=\kappa_1}} 1\nonumber\\
    &\ll_\l\frac{1}{(\sigma_\A^2(q,m,n,r))^{\l/2}}\sum_{\kappa=1}^{\lfloor (\l-1)/2\rfloor} (mn)^{\kappa}\sum_{u_{\gamma},v_{\tau},\kappa_1}m^{u_{\gamma}-\kappa} n^{v_{\tau}-\kappa}q^{-r\kappa_1}. \nonumber
\end{align}
The summation above is under the constraints for $u_{\gamma},v_{\tau}$ appearing in (\ref{5:m3}) and $1 \leq \kappa_1 \leq \kappa$. We then obtain 
\begin{align} \label{M3}
    M_3
    &\ll_\l \frac{1}{(\sigma_\A^2(q,m,n,r))^{\l/2}}\sum_{\kappa=1}^{\lfloor (\l-1)/2\rfloor} (mn)^\kappa \sum_{\kappa_1=1}^\kappa\sum_{\l_1=2\kappa_1}^{\l-2\kappa+2\kappa_1}(m+n)^{\lfloor(\l_1+\l)/2\rfloor-\kappa}q^{-r\kappa_1}\nonumber\\
    &\ll_\l \frac{(m+n)^\l}{(\sigma_\A^2(q,m,n,r))^{\l/2}}\sum_{\kappa=1}^{\lfloor (\l-1)/2\rfloor}\left[\frac{mn}{(m+n)^2}\right]^\kappa\sum_{\kappa_1=1}^\kappa [(m+n)q^{-r}]^{\kappa_1}.
\end{align}
\noindent {\bf Case 1.} Suppose  
$$\lim_{m,n \to \infty} \frac{q^r}{m+n}=0.$$ 
By (\ref{S}), we have 
$$\sigma_\A^2(q,m,n,r) \asymp mn(m+n)q^{-r}.$$ 
Thus (\ref{M3}) yields
\begin{align*}
    M_3&\ll_\l \frac{(m+n)^\l}{[mn(m+n)q^{-r}]^{\l/2}}\sum_{\kappa=1}^{\lfloor (\l-1)/2\rfloor}\left[\frac{mn}{(m+n)^2}\right]^\kappa[(m+n)q^{-r}]^{\kappa}\\
    &\ll_\l \left(\frac{q^r}{N}\right)^{\l/2}\sum_{\kappa=1}^{\lfloor (\l-1)/2\rfloor} \left(\frac{N}{q^r}\right)^{\kappa}.
\end{align*}
Here $N:=\min\{m,n\}$. If we assume further that 
\begin{eqnarray} \label{5:aN}
\lim_{m,n \to \infty} \frac{q^r}{N}=0, \end{eqnarray}
then the above implies
$$M_3 \ll_\l \begin{cases}
    \sqrt{\frac{q^r}{N}} &(\l \text{ odd})\\
    \frac{q^r}{N} &(\l \text{ even}).
\end{cases}$$

\noindent {\bf Case 2.} Suppose 
$$\lim_{m,n \to \infty} \frac{q^r}{m+n}=\infty.$$ 
By (\ref{S}), we have $\sigma_\A^2 \asymp mn$. Hence (\ref{M3}) yields
\begin{align*}
    M_3&\ll_\l \frac{(m+n)^\l}{(mn)^{\l/2}}\sum_{\kappa=1}^{\lfloor (\l-1)/2\rfloor}\left[\frac{mn}{(m+n)^2}\right]^\kappa(m+n)q^{-r}\\
    &\ll_\l \left[\frac{(m+n)^2}{mn}\right]^{\l/2-1}(m+n)q^{-r}, 
\end{align*}
since $0<\frac{mn}{(m+n)^2}<\frac{1}{2}$. If we assume further that 
\begin{eqnarray} \label{5:aN2}  
\lim_{m,n \to \infty} \frac{q^r}{(m+n)^a}=\infty, \qquad \text{ for any fixed }a>0, 
\end{eqnarray}
or
\begin{eqnarray} \label{5:aN3}  
m \asymp n, \quad \text{ and } \quad \lim_{m\to \infty} \frac{q^r}{m}=\infty, 
\end{eqnarray}
then we can still conclude that $M_3=o_{\l}(1)$.

Putting all above estimates of $M_0,M_1,M_2,M_3$ into (\ref{W1}), we conclude that for any $\l \geq 3$,
\begin{itemize}
    \item[(1)] Under Assumption (\ref{5:aN}), 
$$\E[\cts(\mathbf{XY})^\l]=\begin{cases}
    O_\l\left(\sqrt{\frac{q^r}{N}}\right) &(\l \text{ odd})\\
    (\l-1)!!+O_\l\left(\frac{q^r}{N}\right) &(\l \text{ even});
\end{cases}$$
\item[(2)] Under Assumptions (\ref{5:aN2}) or (\ref{5:aN3}), 
$$\E[\cts(\mathbf{XY})^\l]=\begin{cases}
    o_\l\left(1\right) &(\l \text{ odd})\\
    (\l-1)!!+o_\l\left(1\right) &(\l \text{ even}).
\end{cases}$$
\end{itemize}
Here $N=\min\{m,n\}$. This completes the proof of Theorem \ref{M}. \hfill $\square$

\bibliographystyle{elsarticle-num}

\bibliography{CX-2024}

\begin{thebibliography}{1}
\expandafter\ifx\csname url\endcsname\relax
  \def\url#1{\texttt{#1}}\fi
\expandafter\ifx\csname urlprefix\endcsname\relax\def\urlprefix{URL }\fi
\expandafter\ifx\csname href\endcsname\relax
  \def\href#1#2{#2} \def\path#1{#1}\fi

\bibitem{Mig}
T.~Migler, K.~E. Morrison, M.~Ogle, How much does a matrix of rank $k$ weigh?, Math. Mag. 79~(4) (2006) 262--271.

\bibitem{Sanna}
C.~Sanna, On the distribution of the entries of a fixed-rank random matrix over a finite field, Finite Fields Appl. 93~(102333) (2024).

\bibitem{Sanna1}
C.~Sanna, A note on the distribution of weights of fixed-rank matrices over the binary field, Finite Fields Appl. 87~(102157) (2023).

\bibitem{MR4009185}
C.~H. Chan, E.~Kung, M.~Xiong, \href{https://doi.org/10.1109/TIT.2019.2912309}{Random matrices from linear codes and {W}igner's semicircle law}, IEEE Trans. Inform. Theory 65~(10) (2019) 6001--6009.
\newblock \href {https://doi.org/10.1109/TIT.2019.2912309} {\path{doi:10.1109/TIT.2019.2912309}}.
\newline\urlprefix\url{https://doi.org/10.1109/TIT.2019.2912309}

\bibitem{MR4519869}
C.~H. Chan, M.~Xiong, \href{https://doi.org/10.3934/amc.2022072}{Spectral distribution of random matrices from mutually unbiased bases}, Adv. Math. Commun. 16~(4) (2022) 721--732.
\newblock \href {https://doi.org/10.3934/amc.2022072} {\path{doi:10.3934/amc.2022072}}.
\newline\urlprefix\url{https://doi.org/10.3934/amc.2022072}

\bibitem{MR3273060}
J.~Xia, M.~Xiong, \href{https://doi.org/10.1109/TIT.2014.2354035}{On a question of {B}abadi and {T}arokh}, IEEE Trans. Inform. Theory 60~(11) (2014) 7355--7367.
\newblock \href {https://doi.org/10.1109/TIT.2014.2354035} {\path{doi:10.1109/TIT.2014.2354035}}.
\newline\urlprefix\url{https://doi.org/10.1109/TIT.2014.2354035}

\bibitem{MR3930614}
R.~Durrett, \href{https://doi.org/10.1017/9781108591034}{Probability---theory and examples}, 5th Edition, Vol.~49 of Cambridge Series in Statistical and Probabilistic Mathematics, Cambridge University Press, Cambridge, 2019.
\newblock \href {https://doi.org/10.1017/9781108591034} {\path{doi:10.1017/9781108591034}}.
\newline\urlprefix\url{https://doi.org/10.1017/9781108591034}

\end{thebibliography}
\end{document}